\newenvironment{proof}{\par\noindent{\bf Proof \,}}{$\hfill \Box$\par\bigskip}
\date{\empty}
\newtheorem{thm}{Theorem}[section]
\newtheorem{lem}[thm]{Lemma}
\newtheorem{prop}[thm]{Proposition}
\newtheorem{cor}[thm]{Corollary}
\newtheorem{ex}[thm]{Example}
\begin{document}

\title{Strongly $J$-Clean Rings with Involutions}

\author{Huanyin Chen\thanks{ Department of Mathematics, Hangzhou Normal University, Hangzhou,
310036, People's Republic of China, e-mail: huanyinchen@yahoo.cn} ,
Abdullah Harmanc\i\thanks{ Hacettepe University, Department of
Mathematics,
 06800 Beytepe Ankara, Turkey, e-mail: abdullahharmanci@gmail.com} , A. \c Ci\u gdem \" Ozcan\thanks{Corresponding author.  Hacettepe University, Department of Mathematics,
 06800 Beytepe Ankara, Turkey, e-mail: ozcan@hacettepe.edu.tr}}

\maketitle

\begin{abstract} \noindent A ring with an involution $*$ is called strongly $J$-$*$-clean if every element is a sum
of a projection and an element of the Jacobson radical that
commute. In this article, we prove several results characterizing
this class of rings. It is shown that a $*$-ring $R$ is strongly
$J$-$*$-clean, if and only if $R$ is uniquely clean and strongly
$*$-clean, if and only if $R$ is uniquely strongly $*$-clean, that
is, for any $a\in R$, there exists a unique projection $e\in R$
such that $a-e$ is invertible and $ae=ea$. \\\\
\noindent {\bf 2010 Mathematics Subject Classification :} 16W10, 16E50\\
\noindent {\bf Key words}: strongly $J$-$*$-clean ring, strongly
$*$-clean ring, uniquely clean ring.
\end{abstract}

\section{Introduction}

A ring $R$ is {\em strongly clean} provided that every element is
a sum of an idempotent and a unit  that commute with each other
(\cite{N}). A ring $R$ is {\em strongly $J$-clean} provided that
every element is a sum of an idempotent and an element in its
Jacobson radical that commute (\cite{Che1}). A ring $R$ is {\em
uniquely strongly clean} provided that for any $a\in R$, there
exists a unique idempotent $e\in R$ such that $a-e$ is invertible
and $ae=ea$ (\cite{CWZ}). Note that $\{$uniquely strongly clean
rings$ \}\subsetneqq \{$ strongly $J$-clean rings$\}\subsetneqq
\{$ strongly clean rings$\}$. The above mentioned classes of rings
have been getting much attention. Their relations with other
classes of rings, such as unit-regular rings, strongly
$\pi$-regular rings and others, have been studied in the past.

An {\em involution} of a ring $R$ is an anti-automorphism whose square
is the identity map. A ring $R$ with involution $*$ is
called a {\em $*$-ring}. All $C^*$-algebras and Rickart $*$-rings are $*$-rings.
Furthermore, every commutative ring can be regarded as a $*$-ring
with the identity involution $*$. An element $e$ in a $*$-ring R
is called a {\em projection} if $e^2 = e = e^*$. A $*$-ring $R$ is
{\em strongly $*$-clean} if each of its elements is a sum of a unit and
a projection that commute with each other (see \cite{LZ, Va}).

The main purpose of this paper is to explore strong $J$-cleanness
for $*$-rings.  We call a $*$-ring $R$ {\em strongly
$J$-$*$-clean} provided that every element is a sum of a
projection and an element in its Jacobson radical  that commute
with each other. We show several results characterizing this class
of rings. It is proved that a $*$-ring $R$ is strongly
$J$-$*$-clean, if and only if $R$ is uniquely clean and strongly
$*$-clean (Theorem~\ref{corJ}). We say that a $*$-ring $R$ is {\em
uniquely strongly $*$-clean} provided that for any $a\in R$, there
exists a unique projection $e\in R$ such that $a-e$ is invertible
and $ae=ea$. We show that strongly $J$-$*$-clean rings and
uniquely strongly $*$-clean rings are equivalent notions unlike
their non-involutive counterparts (Theorem~\ref{corJ}). Also it is
proved that $R$ is strongly $J$-$*$-clean if and only if $R$ is
strongly $*$-clean and the Jacobson radical $J(R)=\{ x\in R~|~1-x
$\, is invertible $ \}$ (Theorem~\ref{str34}). As consequences,
various properties of strongly $J$-$*$-clean rings are derived.

Throughout, all rings are associative with identity. We use $U(R)$
to denote the set of all invertible elements in the ring $R$ and
$J(R)$ always stands for the Jacobson radical of $R$.

\section{Strongly $J$-$*$-Clean Rings}

The aim of this section is to characterize strongly $J$-$*$-clean
rings by means of strong $*$-cleanness. A $*$-ring $R$ is
$J$-$*$-clean in the case that every element is a sum of an
idempotent and an element in this Jacobson radical. We begin with
the following result.

\begin{prop}\label{prop1} Let $R$ be a $*$-ring. Then the following are equivalent:
\begin{itemize}
\item[\rm{(1)}] $R$ is strongly $J$-$*$-clean.
\item[\rm{(2)}] $R$ is strongly $J$-clean and strongly $*$-clean.
\item[\rm{(3)}] $R$ is abelian and $R$ is $J$-$*$-clean.
\end{itemize}
\end{prop}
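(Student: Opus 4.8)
The plan is to prove the cycle of implications $(1)\Rightarrow(2)\Rightarrow(3)\Rightarrow(1)$. Two auxiliary facts will carry most of the weight, and I would isolate them first: \emph{in a strongly $*$-clean ring every idempotent is a projection}, and \emph{a $*$-ring in which every idempotent is a projection is abelian}. Granting these, each implication is short. Throughout I use that $J(R)$ is invariant under $*$ (since $*$ is an anti-automorphism) and that $1+x\in U(R)$ for every $x\in J(R)$.

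For $(1)\Rightarrow(2)$, strong $J$-cleanness is immediate, since every projection is an idempotent, so a strongly $J$-$*$-clean decomposition is in particular a strongly $J$-clean one. For strong $*$-cleanness, I would take $a=e+j$ with $e$ a projection, $j\in J(R)$ and $ej=je$, and rewrite it as $a=\big((2e-1)+j\big)+(1-e)$. Here $1-e$ is again a projection, $2e-1$ is a unit with $(2e-1)^2=1$, and since $j$ commutes with $e$ it commutes with $2e-1$; hence $(2e-1)+j=(2e-1)\big(1+(2e-1)^{-1}j\big)$ is a unit commuting with $1-e$, giving a strong $*$-clean decomposition of $a$.

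For $(2)\Rightarrow(3)$, strong $*$-cleanness together with the two auxiliary facts shows that $R$ is abelian. For $J$-$*$-cleanness I would take any $a$, use strong $J$-cleanness to write $a=e+j$ with $e^2=e$ and $j\in J(R)$, and then invoke the first auxiliary fact to conclude that $e$ is a projection; this already exhibits $a$ as a projection plus an element of $J(R)$. For $(3)\Rightarrow(1)$, given $a=e+j$ with $e$ a projection and $j\in J(R)$, abelianness makes $e$ central, so $ej=je$ holds automatically and the decomposition is strongly $J$-$*$-clean.

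The real work, and the step I expect to be the main obstacle, is establishing the two auxiliary facts. For the first, I would apply strong $*$-cleanness to the idempotent $e$ itself, writing $e=u+p$ with $u\in U(R)$, $p$ a projection and $up=pu$. Then $u=e-p$ is a unit while $e,p$ are commuting idempotents; the identity $(e-p)\cdot ep=0$ forces $ep=0$, whence $(e-p)^2=e+p$ is a unit that is also idempotent, so $e+p=1$ and $p=1-e$, making $e=1-p$ a projection. For the second fact, let $e$ be an idempotent (hence a projection) and $r\in R$; the element $c=e+er(1-e)$ is again idempotent, hence a projection, so $c=c^{*}=e+(1-e)r^{*}e$, which gives $er(1-e)=(1-e)r^{*}e$. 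This common value lies in both Peirce corners $eR(1-e)$ and $(1-e)Re$, and left multiplication by $e$ annihilates the second expression while fixing the first, so it equals $0$; applying the same to the projection $1-e$ yields $(1-e)re=0$, and therefore $er=ere=re$. These two computations are the crux; the remaining implications are bookkeeping.
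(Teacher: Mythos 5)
Your proof is correct, and its skeleton --- the cycle $(1)\Rightarrow(2)\Rightarrow(3)\Rightarrow(1)$ with the rewriting $a=(1-e)+\big((2e-1)+j\big)$ for the implication $(1)\Rightarrow(2)$ --- is exactly the paper's. The one genuine difference is how the key step $(2)\Rightarrow(3)$ is handled: the paper simply cites Li--Zhou (\cite[Theorem 2.2]{LZ}) for the fact that a strongly $*$-clean ring is abelian with every idempotent a projection, whereas you prove both halves of that fact from scratch. Your two arguments are correct: writing an idempotent $e=u+p$ with $p$ a projection commuting with the unit $u=e-p$, deducing $ep=pe$, then $ep=0$ from $(e-p)ep=0$, so that $(e-p)^2=e+p$ is an idempotent unit, hence $e+p=1$ and $e=1-p$ is a projection; and the standard Peirce-corner computation $e+er(1-e)=\big(e+er(1-e)\big)^{*}=e+(1-e)r^{*}e$ forcing $er(1-e)=0=(1-e)re$, hence $er=ere=re$. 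So your proof buys self-containedness at the cost of length, while the paper's buys brevity via the citation; mathematically the routes coincide. One small remark: in $(1)\Rightarrow(2)$ the commutation of $j$ with $2e-1$ is not actually needed to see that $(2e-1)+j$ is a unit (a unit plus an element of $J(R)$ is always a unit), though it is needed, as you correctly use it, to verify that this unit commutes with the projection $1-e$.
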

\begin{proof} (1) $\Rightarrow$ (2) Clearly, $R$ is strongly $J$-clean. For any $a\in R$,
there exists a projection $e\in R$ such that $u:=a-e\in J(R)$ and
$ae=ea$. Then $a=(1-e)+(2e-1+u)$. Obviously, $1-e\in R$ is a
projection. As $(2e-1)^2=1$, we see that $2e-1+u\in U(R)$. Thus,
$R$ is strongly $*$-clean.

$(2)\Rightarrow (3)$ Since $R$ is strongly $*$-clean, it follows
from \cite[Theorem 2.2]{LZ} that $R$ is abelian and every
idempotent is a projection. Therefore $R$ is $J$-$*$-clean.

$(3)\Rightarrow (1)$ is obvious.
\end{proof}

\begin{ex}\label{ilk}  {\rm (1)} {\em Let the ring $R = \mathbb{Z}_2\oplus \mathbb{Z}_2$. Define $\sigma : R\rightarrow R$ by
$\sigma(x, y) = (y, x)$. Consider the ring $T_2(R, \sigma) =
\big\{\left (\begin{array}{cc}a & b\\0 & a\end{array}\right)\mid a,
b\in R \big\}$ with the following operations:

\begin{center}$\left (\begin{array}{cc}a & b\\0 &
a\end{array}\right) + \left (\begin{array}{cc}c & d\\0 &
c\end{array}\right) = \left (\begin{array}{cc}a+c & b+d\\0 &
a+c\end{array}\right)$,\,\, $\left (\begin{array}{cc}a & b\\0 &
a\end{array}\right).\left (\begin{array}{cc}c & d\\0 &
c\end{array}\right)=\left (
\begin{array}{cc}ac & ad+b\sigma(c)\\0 &
ac\end{array}\right).$\end{center}
 Then $J(T_2(R, \sigma)) =
\big\{\left (\begin{array}{cc}0 & b\\0 & 0\end{array}\right)\mid
b\in R \big\}$ is nilpotent, and $T_2(R, \sigma)/J(T_2(R,
\sigma))\cong R$ is Boolean. Define $* : R\rightarrow R$ by $\left
(\begin{array}{cc}a & b\\0 & a\end{array}\right)^*=\left
(\begin{array}{cc}a & \sigma(b)\\0 & a\end{array}\right)$. As
$\sigma^2 = 1_R$, we easily check that $*$ is an involution of
$T_2(R,\sigma)$. For any $a, b\in R$, $\left (\begin{array}{cc}a &
0\\0 & a\end{array}\right)$ is a projection. Furthermore, $\left
(\begin{array}{cc}a & b\\0 & a\end{array}\right)-\left
(\begin{array}{cc}a & 0\\0 & a\end{array}\right)\in
J(T_2(R,\sigma))$. Therefore, $T_2(R,\sigma)$ is a $J$-$*$-clean
ring. But it is not strongly $J$-$*$-clean, as $\left
(\begin{array}{cc}(0,1) & (0,0)\\(0,0)
& (0,1)\end{array}\right)$ is not central.}\\

{\rm (2)} {\em Let $R=\mathbb{Z}_{(3)}$ be the localization, and $*=1_R$, the
identical automorphism of $R$. Since $R$ is local, it is strongly
$*$-clean.  Since $R/J(R)$ is not
Boolean, for example $(\frac{2}{1})^2-\frac{2}{1}\not\in
J(R)=3R$, $R$ is not strongly $J$-$*$-clean (see Proposition~\ref{prop26}).} \\

{\rm (3)} {\em Let $R=\left( \begin{array}{ll}
\mathbb{Z}_2& \mathbb{Z}_2\\
0&\mathbb{Z}_2
\end{array}
\right)$. By \cite[Example 2]{N} $R$ is strongly clean, and it is clear that $R/J(R)$ is Boolean. Then $R$ is strongly $J$-clean by \cite[Theorem 2.3]{Che1}. But $R$ cannot be
strongly $J$-$*$-clean for any involution $*$ on $R$ because it is not abelian.}
\end{ex}

Let $R$ be a $*$-ring, and let $C(R)=\{ a\in R~|~ax=xa$ for any
$x\in R\}$. Then $C(R)$ is a subring of the ring $R$. It is easy
to check that $*: C(R)\to C(R)$ is also an anti-automorphism.
Thus, $C(R)$ is a $*$-ring.

\begin{cor}\label{denkk} Let $R$ be a $*$-ring. Then $R$ is strongly $J$-$*$-clean if and only if
\begin{itemize}
\item[{\rm(1)}] $C(R)$ is $J$-$*$-clean;
\item[{\rm(2)}] $R=C(R)+J(R)$.
\end{itemize}
\end{cor}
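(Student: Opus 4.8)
The plan is to prove both implications by leveraging Proposition~\ref{prop1}, which tells us that $R$ is strongly $J$-$*$-clean if and only if $R$ is abelian and $J$-$*$-clean. The key structural fact I would exploit is that in an abelian ring the idempotents (hence the projections) are central, so the ``strong'' cleanness data for each element lives inside the center $C(R)$. I would therefore aim to translate the single condition ``$R$ abelian and $J$-$*$-clean'' into the pair of conditions about $C(R)$ and the decomposition $R=C(R)+J(R)$.

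First, for the forward direction, assume $R$ is strongly $J$-$*$-clean. By Proposition~\ref{prop1}, $R$ is abelian, so every projection of $R$ is central and thus lies in $C(R)$. For condition~(2), take any $a\in R$ and write $a=e+j$ with $e$ a projection, $j\in J(R)$, and $ej=je$; since $e\in C(R)$ and $j\in J(R)$, this exhibits $a\in C(R)+J(R)$, giving $R=C(R)+J(R)$. For condition~(1), I would show $C(R)$ inherits the $J$-$*$-clean property: given $a\in C(R)$, its strongly $J$-$*$-clean decomposition $a=e+j$ has $e$ a projection lying in $C(R)$, and I must check that the radical part $j=a-e$ actually lies in $J(C(R))$, not merely in $J(R)$. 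Here I would use that $J(C(R))=J(R)\cap C(R)$, which holds because $C(R)$ is a subring over which central idempotents and units behave compatibly with those of $R$; concretely, $j=a-e$ is central and lies in $J(R)$, so $1-xj$ is a unit of $R$ for every central $x$, and one verifies its inverse is again central, placing $j$ in $J(C(R))$. This makes $C(R)$ a $J$-$*$-clean $*$-ring.

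For the converse, assume (1) and (2). From $R=C(R)+J(R)$ I would first deduce that $R$ is abelian: any idempotent $f\in R$ can be written $f=c+j$ with $c\in C(R)$, $j\in J(R)$, and the standard argument (reducing modulo $J(R)$ and lifting) shows $f$ is forced to be central, so all idempotents of $R$ are central. Then, given any $a\in R$, write $a=c+j$ with $c\in C(R)$ and $j\in J(R)$; apply the $J$-$*$-cleanness of $C(R)$ to $c$ to get a projection $e\in C(R)$ with $c-e\in J(C(R))\subseteq J(R)$. Then $a-e=(c-e)+j\in J(R)$, $e$ is a projection of $R$, and $e$ commutes with $a$ because $e$ is central; hence $R$ is strongly $J$-$*$-clean.

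The main obstacle I anticipate is the radical-compatibility step $J(C(R))=J(R)\cap C(R)$, or at least the two inclusions actually needed: that the radical part of the decomposition in $R$ descends into $J(C(R))$ (forward direction) and that $J(C(R))$ sits inside $J(R)$ (converse). The subtlety is that $C(R)$ is only a subring, so quasi-invertibility in $C(R)$ need not \emph{a priori} match quasi-invertibility in $R$; the resolution is that centrality lets one transport inverses back and forth, since the inverse of a central quasi-invertible element is again central. Once this radical bookkeeping is handled, the remaining steps are the routine abelian/central manipulations sketched above.
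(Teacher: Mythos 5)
Your forward direction is correct, and in fact more direct than the paper's: once Proposition~\ref{prop1} gives that $R$ is abelian, every strongly $J$-$*$-clean decomposition $a=e+j$ has $e\in C(R)$, so $R=C(R)+J(R)$ falls out immediately, with no need for the paper's detour through unique cleanness and \cite[Proposition 25]{NZ}; your verification that $J(R)\cap C(R)\subseteq J\big(C(R)\big)$ via centrality of inverses is also the right argument there. The genuine gap is in your converse, at its very first step: you claim that condition (2) alone, $R=C(R)+J(R)$, forces $R$ to be abelian by ``the standard argument (reducing modulo $J(R)$ and lifting).'' No such argument exists, because the implication is false. Take a field $k$, let $B=k[y]_{(y)}$ and $D=k[z]_{(z)}$, and make the Laurent series field $M=k((z))$ a $(B,D)$-bimodule via the ring embeddings $B\to k((z))$, $y\mapsto 1-z$, and $D\subseteq k((z))$. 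Put $R=\left(\begin{array}{cc} B & M\\ 0 & D\end{array}\right)$. Then $J(R)=\left(\begin{array}{cc} yB & M\\ 0 & zD\end{array}\right)$, and $C(R)$ consists of the diagonal elements $\mathrm{diag}(b,d)$ with $b(1-z)=d(z)$ in $k((z))$. Given any $\left(\begin{array}{cc} b & m\\ 0 & d\end{array}\right)\in R$, the interpolating polynomial $h(z)=d(0)+\big(b(0)-d(0)\big)z$ produces the central element $\mathrm{diag}\big(h(1-y),h(z)\big)$, which differs from the given element by a member of $J(R)$; hence $R=C(R)+J(R)$. Yet the idempotent $e_{11}$ is not central, so $R$ is not abelian. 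In particular $e_{11}-\mathrm{diag}(1-y,z)\in J(R)$ with $\mathrm{diag}(1-y,z)$ central: an idempotent congruent modulo $J(R)$ to a central element need not itself be central, which is exactly where ``reduce mod $J(R)$ and lift'' breaks down.

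What this shows is that condition (1) must enter the proof of abelianness, and that is precisely how the paper argues. Given $e^2=e$, write $e=b+c$ with $b\in C(R)$, $c\in J(R)$ (condition (2)); then use condition (1) to write $b=f+w$ with $f$ a projection in $C(R)$ and $w\in J\big(C(R)\big)$ --- this is idempotent lifting \emph{inside} $C(R)$ modulo $J\big(C(R)\big)$, which your sketch never invokes (and which is unavailable in the example above, where $C(R)$ is a semilocal domain whose only idempotents are $0$ and $1$, hence not $J$-$*$-clean --- consistent with the corollary). Since $f$ is a central idempotent, $(e-f)\big(1-(e-f)^2\big)=0$, while $(e-f)^2=(w+c)^2\in J\big(C(R)\big)+J(R)$ gives $1-(e-f)^2\in U(R)$; hence $e=f\in C(R)$, and only then does abelianness follow. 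Once this is in place, the remainder of your converse (decompose $a=c+j$, apply (1) to $c$, conclude via Proposition~\ref{prop1}) is fine, with one further caveat: the inclusion $J\big(C(R)\big)\subseteq J(R)$ that you need also depends on condition (2). Your ``transport of inverses'' only makes $1-vc$ invertible for central $c$; for arbitrary $r\in R$ you must write $r=c+j$ by (2) and then $1-vr=(1-vc)-vj=(1-vc)\big(1-(1-vc)^{-1}vj\big)\in U(R)$.
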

\begin{proof} Suppose that $R$ is strongly $J$-$*$-clean. Then $R$
is strongly $*$-clean by Proposition~\ref{prop1}. Thus, every
projection in $R$ is central from~\cite[Theorem 2.2]{LZ}. For any
$a\in C(R)$, there exist a projection $e\in R$ and an element
$u\in J(R)$ such that $a=e+u$. Hence, $e\in C(R)$ is a projection.
This implies that $u\in J(R)\cap C(R)\subseteq J\big(C(R)\big)$.
As a result, $C(R)$ is $J$-$*$-clean. On the other hand, it
follows from Proposition~\ref{prop1} that $R$ is abelian strongly
$J$-clean. In view of \cite[Corollary 16.4.16]{Ch}, $R$ uniquely
clean. According to \cite[Proposition 25]{NZ}, $R=C(R)+J(R)$.

Conversely, assume that $(1)$ and $(2)$ hold. For any idempotent
$e\in R$, there exist some $b\in C(R)$ and $c\in J(R)$ such that
$e=b+c$. As $C(R)$ is $J$-$*$-clean, we can find a projection
$f\in C(R)$ and a $w\in J\big(C(R)\big)$ such that $b=f+w$. Hence,
$e=f+(w+c)$, and so $e-f=w+c$. Obviously,
$(e-f)\big(1-(e-f)^2\big)=0$. On the other hand, $(e-f)^2\in
J\big(C(R)\big)+J(R)$, and so $1-(e-f)^2\in U(R)$. Therefore
$e=f$, i.e., every idempotent in $R$ is a projection. For any
$a\in R$, write $a=s+t, s\in C(R), t\in J(R)$. We have a
projection $g\in R$ such that $s=g+v$ with $v\in J\big(C(R)\big)$.
This shows that $a=g+(t+v)$, and so $R$ is $J$-$*$-clean.
According to \cite[Lemma 2.1]{LZ} and Proposition~ \ref{prop1}, we complete the
proof.\end{proof}

According to \cite[Theorem 2.1]{Che2}, a ring $R$ is uniquely
clean if and only if $R$ is an abelian exchange ring and $R/M\cong
\mathbb{Z}_2$ for all maximal ideals $M$ of $R$. For strongly
$J$-$*$-clean rings, we derive the following result.

\begin{prop}\label{str7} Let $R$ be a $*$-ring. Then $R$ is strongly $J$-$*$-clean if and only if
\begin{enumerate}
\item[\rm{(1)}] $R$ is strongly $*$-clean ring;
\item[\rm{(2)}] $R/M\cong {\Bbb Z}_2$ for all maximal ideals $M$ of $R$.
\end{enumerate}
\end{prop}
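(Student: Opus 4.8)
The plan is to route everything through Proposition~\ref{prop1} together with the characterization of uniquely clean rings. For the necessity direction, suppose $R$ is strongly $J$-$*$-clean. Then condition (1) is immediate from Proposition~\ref{prop1}, since strong $J$-$*$-cleanness implies strong $*$-cleanness. For condition (2) I would again invoke Proposition~\ref{prop1} to conclude that $R$ is abelian and strongly $J$-clean, whence $R$ is uniquely clean by \cite[Corollary 16.4.16]{Ch} (this is precisely the deduction already carried out inside the proof of Corollary~\ref{denkk}). The characterization \cite[Theorem 2.1]{Che2} then yields $R/M\cong \mathbb{Z}_2$ for every maximal ideal $M$, giving (2).

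For the sufficiency direction, assume (1) and (2). Since $R$ is strongly $*$-clean, \cite[Theorem 2.2]{LZ} gives that $R$ is abelian (and every idempotent is a projection); moreover $R$ is strongly clean, hence clean, hence an exchange ring. Thus $R$ is an abelian exchange ring satisfying $R/M\cong \mathbb{Z}_2$ for all maximal ideals $M$, so \cite[Theorem 2.1]{Che2} shows that $R$ is uniquely clean. The key consequence I want to extract is that $R/J(R)$ is Boolean, which is a standard structural property of uniquely clean rings (\cite{NZ}). I would then combine strong cleanness with this Boolean quotient: since $R$ is strongly clean and $R/J(R)$ is Boolean, \cite[Theorem 2.3]{Che1} gives that $R$ is strongly $J$-clean. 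Being simultaneously strongly $J$-clean and strongly $*$-clean, $R$ is strongly $J$-$*$-clean by the implication (2)$\Rightarrow$(1) of Proposition~\ref{prop1}, completing the argument.

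I expect the main obstacle to lie in the sufficiency direction, specifically the passage from the pointwise ``local'' hypothesis (2) to the global statement that $R/J(R)$ is Boolean. This is exactly where the upgrade of (1) and (2) to unique cleanness via \cite[Theorem 2.1]{Che2} does the real work, after which the structure theory of uniquely clean rings supplies the Boolean quotient for free. The remaining bookkeeping—abelianness and the implications strongly $*$-clean $\Rightarrow$ strongly clean $\Rightarrow$ clean $\Rightarrow$ exchange, and the final assembly through Proposition~\ref{prop1}—is routine and requires no separate computation.
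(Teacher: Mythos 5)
Your proposal is correct and takes essentially the same route as the paper: both directions run through Proposition~\ref{prop1} together with the characterization of uniquely clean rings in \cite[Theorem 2.1]{Che2} (necessity via abelian strongly $J$-clean $\Rightarrow$ uniquely clean $\Rightarrow$ $R/M\cong\mathbb{Z}_2$; sufficiency via abelian exchange plus (2) $\Rightarrow$ uniquely clean $\Rightarrow$ strongly $J$-clean $\Rightarrow$ strongly $J$-$*$-clean). The only cosmetic difference is your final step, where instead of passing directly from uniquely clean to strongly $J$-clean you insert the intermediate fact that $R/J(R)$ is Boolean and invoke \cite[Theorem 2.3]{Che1}; this is a valid, equivalent bookkeeping of the same implication.
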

\begin{proof} Suppose that $R$ is strongly $J$-$*$-clean. Then $R$ is an abelian strongly $J$-clean ring by
Proposition~\ref{prop1}. Thus, $R$ is uniquely clean from
\cite[Corollary 16.4.16]{Ch}. Therefore $R/M\cong {\Bbb Z}_2$ for
all maximal ideals $M$ of $R$ by \cite[Theorem 2.1]{Che2}.

Conversely, assume that $(1)$ and $(2)$ hold. Then every
idempotent in $R$ is a projection. Further, $R$ is an abelian
exchange ring. According to \cite[Theorem 2.1]{Che2}, $R$ is
uniquely clean, and so $R$ is strongly $J$-clean. Accordingly,
$R$ is strongly $J$-$*$-clean by Proposition~\ref{prop1}.
\end{proof}

\begin{cor}\label{str13} Let $R$ be a $*$-ring. Then $R$ is strongly $J$-$*$-clean if and only if
\begin{enumerate}
\item[\rm{(1)}] $R$ is strongly $*$-clean ring;
\item[\rm{(2)}] For all maximal ideal $M$ of $R$, $1$ is not the sum of two units in $R/M$.
\end{enumerate}
\end{cor}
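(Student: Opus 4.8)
The plan is to deduce this corollary directly from Proposition~\ref{str7}. Both characterizations share the hypothesis that $R$ is strongly $*$-clean, so it suffices to show that, once we assume $R$ is strongly $*$-clean, condition~(2) of the corollary is equivalent to the condition ``$R/M\cong\mathbb{Z}_2$ for all maximal ideals $M$'' appearing in Proposition~\ref{str7}. Since both conditions are quantified over all maximal ideals, I would fix a maximal (two-sided) ideal $M$ and prove the equivalence ring by ring: $R/M\cong\mathbb{Z}_2$ exactly when $1$ is not the sum of two units in $R/M$.

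The first step is to identify the structure of $R/M$. Because $R$ is strongly $*$-clean, it is strongly clean, hence clean, hence an exchange ring, and by \cite[Theorem 2.2]{LZ} it is abelian. I would then argue that $R/M$ is a division ring: idempotents lift modulo $M$ in the exchange ring $R$, and any such lift is central since $R$ is abelian, so it reduces to a central idempotent of the simple ring $R/M$, which must be $0$ or $1$. Thus $R/M$ is an exchange ring with only trivial idempotents, hence local; being simple it has zero Jacobson radical, so $R/M$ is a division ring $D$.

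With $R/M=D$ a division ring, the remaining computation is routine. If $D\cong\mathbb{Z}_2$ then its only unit is $1$ and $1+1=0\neq 1$, so $1$ is not a sum of two units. Conversely, if $D\not\cong\mathbb{Z}_2$ then $|D|\geq 3$, so there is an element $a\in D$ with $a\neq 0$ and $a\neq 1$; then $1=a+(1-a)$ exhibits $1$ as a sum of the two units $a$ and $1-a$. This yields the equivalence: $D\cong\mathbb{Z}_2$ if and only if $1$ is not a sum of two units in $D$. Combining this with the reduction above and Proposition~\ref{str7} settles both directions. For the forward direction, (1) and (2) force $R/M\cong\mathbb{Z}_2$ for every $M$, whence $R$ is strongly $J$-$*$-clean; for the converse, Proposition~\ref{str7} gives $R/M\cong\mathbb{Z}_2$, and the $\mathbb{Z}_2$ observation yields condition~(2).

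I expect the only real obstacle to be the structural step that $R/M$ is a division ring, which rests on idempotent-lifting in exchange rings together with the abelianness supplied by strong $*$-cleanness; once that is in hand, the two-unit criterion is immediate and the appeal to Proposition~\ref{str7} closes the argument.
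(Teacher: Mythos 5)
Your proposal is correct, and its skeleton matches the paper's: both directions are routed through Proposition~\ref{str7}, and the crux in the nontrivial direction is showing that under (1) and (2) each quotient $R/M$ is a division ring, after which the elementary observation that a division ring $D$ admits $1$ as a sum of two units exactly when $|D|\geq 3$ closes the argument. Where you genuinely diverge is the structural step. The paper first gets trivial idempotents in $R/M$ (as you do, via lifting in the exchange ring $R$ and centrality from \cite[Theorem 2.2]{LZ}), but then invokes the theory of exchange rings with artinian primitive factors to conclude that the simple, indecomposable ring $R/M$ is simple artinian, applies Wedderburn--Artin to write $R/M\cong M_n(D)$, and uses centrality of idempotents to force $n=1$. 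You instead quote the lighter fact that an exchange ring whose only idempotents are $0$ and $1$ is local; combined with $J(R/M)=0$ from simplicity, this gives the division ring directly, bypassing artinian primitive factors and Wedderburn--Artin entirely. That fact is standard (it is Nicholson's result from his 1977 paper on lifting idempotents and exchange rings) but it is not in this paper's bibliography, so a careful write-up should add that reference; everything else you use (quotients of exchange rings are exchange, idempotents lift modulo $M$, images of central idempotents are central and hence trivial in a simple ring) is exactly what the paper also needs. Net effect: your route is more elementary and shorter, at the price of one outside citation; the paper's route leans on heavier structure theory it was already using elsewhere.
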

\begin{proof} One direction is obvious by the previous result. Conversely, assume that $(1)$ and $(2)$
hold. Let $M$ be a maximal ideal of $R$. Clearly, $R/M$ is an
abelian exchange ring; hence, $R/M$ is an exchange ring with
artinian primitive factors. As $R/M$ is simple, $J(R/M)=0$. Let
$f\in R/M$ be an idempotent. Then $(R/M)f(R/M)=0$ or $R/M$, and so
$f=\overline{0}$ or $\overline{1}$. This means that $R/M$ is
indecomposable. Therefore $R/M$ is simple artinian. Thus,
$R/M\cong M_n(D)$, where $D$ is a division ring. As every
idempotent in $R/M$ is central, we deduce that $n=1$. This implies
that $R/M\cong D$. If $|D|\geq 3$, then we can find a set $\{ 0,
1,x\}\subseteq D$, where $x\neq 0,1$. This shows that $1-x\in
U(R/M)$, a contradiction. Hence, $R/M\cong {\Bbb Z}_2$. According
to Proposition~\ref{str7}, we complete the proof.\end{proof}

\begin{prop}\label{prop26} Let R be a $*$-ring. Then the following are equivalent:
\begin{enumerate}
\item[\rm{(1)}] $R$ is strongly $J$-$*$-clean.
\item[\rm{(2)}] $R/J(R)$ is Boolean and $R$ is strongly $*$-clean.
\end{enumerate}
\end{prop}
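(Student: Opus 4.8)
The plan is to lean entirely on Proposition~\ref{prop1}, which tells us that $R$ is strongly $J$-$*$-clean exactly when $R$ is both strongly $J$-clean and strongly $*$-clean. Since strong $*$-cleanness already appears verbatim in condition (2), the proposition reduces to showing that, for a strongly $*$-clean ring, being strongly $J$-clean is equivalent to $R/J(R)$ being Boolean. I would therefore split the argument along the two implications and invoke the existing characterizations rather than argue from scratch.

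For $(1)\Rightarrow(2)$: by Proposition~\ref{prop1}, strong $J$-$*$-cleanness yields both strong $*$-cleanness, which is the first half of (2), and strong $J$-cleanness. To extract that $R/J(R)$ is Boolean, I would take an arbitrary $a\in R$ and write $a=e+j$ with $e$ an idempotent and $j\in J(R)$ that commute. Passing to $R/J(R)$ kills $j$, so $\overline{a}=\overline{e}$ is an idempotent; since $a$ was arbitrary, every element of $R/J(R)$ is idempotent, i.e.\ $R/J(R)$ is Boolean.

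For $(2)\Rightarrow(1)$: a strongly $*$-clean ring is in particular strongly clean, because a projection is an idempotent, so a sum of a unit and a projection is a sum of a unit and an idempotent. Having both strong cleanness and $R/J(R)$ Boolean, I would invoke \cite[Theorem 2.3]{Che1} (the same result used in Example~\ref{ilk}(3)) to conclude that $R$ is strongly $J$-clean. Then $R$ is simultaneously strongly $J$-clean and strongly $*$-clean, so Proposition~\ref{prop1} delivers strong $J$-$*$-cleanness.

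The argument is essentially bookkeeping on top of two already-available facts, so I expect no serious obstacle. The only point requiring genuine input is the backward direction's reliance on \cite[Theorem 2.3]{Che1}, which characterizes strongly $J$-clean rings as precisely the strongly clean rings with Boolean $J(R)$-quotient; everything else is the routine observation that projections are idempotents together with the fact that killing the radical turns the idempotent part of a strongly $J$-clean decomposition into an idempotent residue class.
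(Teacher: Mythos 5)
Your proposal is correct and follows essentially the same route as the paper: both reduce via Proposition~\ref{prop1} to the known equivalence ``strongly $J$-clean $\Leftrightarrow$ strongly clean with $R/J(R)$ Boolean,'' the paper citing \cite[Proposition 16.4.15]{Ch} in both directions where you inline the (easy, correct) forward computation $\overline{a}=\overline{e}$ and cite the equivalent result \cite[Theorem 2.3]{Che1} for the converse. These are only cosmetic differences in which reference carries the load.
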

\begin{proof}  (1) $\Rightarrow$ (2) In view of
Proposition~\ref{prop1}, $R$ is strongly $*$-clean and $R$ is
strongly $J$-clean. By virtue of~ \cite[Proposition 16.4.15]{Ch},
$R/J(R)$ is Boolean.

(2) $\Rightarrow$ (1) As $R$ is strongly $*$-clean, it is strongly
clean. According to \cite[Proposition 16.4.15]{Ch}, $R$ is
strongly $J$-clean. In light of Proposition~\ref{prop1}, $R$ is a
strongly $J$-$*$-clean ring.\end{proof}

\begin{cor}\label{pac9} Let R be a $*$-ring. Then $R$ is strongly $J$-$*$-clean if and only if
\begin{enumerate}
\item[\rm{(1)}] $R$ is strongly $*$-clean;
\item[\rm{(2)}] Every nonzero idempotent in $R$ is not the sum of two units.
\end{enumerate}
\end{cor}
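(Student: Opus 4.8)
The plan is to prove both implications by reducing to the characterizations already in hand, the one genuinely new ingredient being a ``corner'' observation about sums of two units. Throughout I write (2$'$) for the idempotent condition of the statement, to avoid clashing with the like-numbered condition of Corollary~\ref{str13}.

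\emph{Forward direction.} Assuming $R$ is strongly $J$-$*$-clean, Proposition~\ref{prop1} gives that $R$ is strongly $*$-clean, which is (1), and Proposition~\ref{prop26} gives that $R/J(R)$ is Boolean. For (2$'$), suppose a nonzero idempotent $e$ equals $u+v$ with $u,v\in U(R)$. Passing to $R/J(R)$ and using that a Boolean ring has only the trivial unit and characteristic $2$, I get $\bar u=\bar v=\bar 1$, hence $\bar e=\bar 1+\bar 1=\bar 0$; so $e\in J(R)$, and an idempotent in $J(R)$ is $0$, contradicting $e\neq 0$. (Here I would spell out the two cited facts about Boolean rings in a line.)

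\emph{Converse, reduction.} Assume (1) and (2$'$). Since $R$ is strongly $*$-clean it is strongly clean, hence abelian with every idempotent a projection by \cite[Theorem 2.2]{LZ} and an exchange ring; moreover, exactly as in the proof of Corollary~\ref{str13}, $R/M$ is a division ring $D$ for every maximal ideal $M$. By Corollary~\ref{str13} it then suffices to show each such $D\cong\mathbb{Z}_2$, equivalently (via Proposition~\ref{prop26}) that $R/J(R)$ is Boolean. The key tool I would isolate first is a corner lemma: for $R$ abelian and a central idempotent $e$, if $e=x+y$ with $x,y$ units of $eRe$, then $e$ is a sum of two units of $R$. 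Indeed, writing $R\cong eRe\times(1-e)R(1-e)$ and setting $u=x+(1-e)$ and $v=y-(1-e)$, a short check shows $u,v\in U(R)$ (the $(1-e)$-components $\pm(1-e)$ are self-inverse) and $u+v=e$. Contrapositively, (2$'$) forces $1_{eRe}$ to fail to be a sum of two units for \emph{every} nonzero central idempotent $e$. Transferring (2$'$) along $R\to R/J(R)$ is then straightforward: lifting a hypothetical counterexample idempotent through the radical (idempotents lift because $R$ is exchange) together with its two units (units lift because $J(R)$ is the radical) would produce a nonzero idempotent of $R$ that is a sum of two units. So $S:=R/J(R)$ — which is abelian, strongly clean, has $J(S)=0$, and is reduced, being a subdirect product of the division rings $D_M=S/M$ — again satisfies (2$'$).

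\emph{Main obstacle.} It remains to force every $D_M\cong\mathbb{Z}_2$, i.e.\ to conclude that $S$ is Boolean. If some $D_{M_0}$ had $\geq 3$ elements, I would want, via the corner lemma, a \emph{nonzero} idempotent $e\in S$ with $1_{eSe}$ a sum of two units, contradicting (2$'$). Constructing such an $e$ is the hard part: the residue ring $D_{M_0}$ detects only the trivial idempotents, so $M_0$ need not be cut out by an idempotent, and a single ``bad'' maximal ideal may fail to be isolated in the Pierce/Boolean spectrum — exactly the phenomenon behind $\mathbb{Z}_2\times F$, where the witnessing idempotent is $(0,1)$ rather than $1$. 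I expect this localization step to be the crux: starting from an element $t$ with $\bar t\neq 0,1$ in $D_{M_0}$ and its strongly clean decomposition $t=e+u$, I would use the exchange property (idempotents lifting modulo ideals) to manufacture a nonzero central idempotent whose corner concentrates on $M_0$ and in which both $t$ and $1-t$ become units, so that $1=t+(1-t)$ realizes $1_{eSe}$ as a sum of two units. Everything surrounding this — the reductions, the corner lemma, and the transfer to $R/J(R)$ — is routine; extracting the witnessing idempotent for a possibly non-isolated bad maximal ideal is where the real work lies.
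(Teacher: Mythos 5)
Your forward direction is complete and matches the paper's argument (the paper deduces $e\in J(R)$ via $2\in J(R)$, citing \cite[Proposition 3.1]{Che1}, while you use that a Boolean ring has characteristic $2$; both work). The converse, however, has a genuine gap, and you have located it yourself. Everything you actually prove there --- the corner lemma, the transfer of condition (2$'$) from $R$ to $S=R/J(R)$ via lifting of idempotents and units, and the reduction to showing $S/M\cong\mathbb{Z}_2$ for every maximal ideal $M$ --- is correct but preparatory. The step that carries the whole implication (from a maximal ideal $M_0$ with $|S/M_0|\geq 3$, manufacture a \emph{nonzero} idempotent of $S$ that is a sum of two units) is only described as something you ``would'' do, not done. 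And it is not routine: as you yourself observe, $M_0$ need not be cut out by, or even isolated near, any idempotent, so your sketch (take $t$ with $\bar t\neq 0,1$ in $S/M_0$ and use exchange lifting to find a central idempotent $e\neq 0$ with $te$ and $(1-t)e$ units of $eSe$) is precisely the assertion that requires proof, not a proof of it.

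The missing step is exactly what the paper outsources: its entire proof of the converse is that $R$ is an exchange ring, hence by \cite[Theorem 13]{LZ1} condition (2) forces $R/J(R)$ to be Boolean, after which Proposition~\ref{prop26} finishes. In other words, the implication (exchange ring in which no nonzero idempotent is a sum of two units) $\Rightarrow$ ($R/J(R)$ Boolean) is a published theorem of Lee and Zhou, not an exercise, and your proposal stops exactly where that theorem begins. To close the gap you would need either to cite that result or to reproduce its proof; as written, the converse is not established.
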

\begin{proof} Suppose that $R$ is a strongly $J$-$*$-clean ring.
By virtue of Proposition~\ref{prop26}, $R$ is a strongly $*$-clean
ring, and $R/J(R)$ is Boolean. Let $0\neq e\in R$ be an
idempotent. If $e=u+v$ for some $u,v\in U(R)$. Then
$\overline{e}=\overline{u}+\overline{v}$ in $R/J(R)$. As
$\overline{u}=\overline{v}=\overline{1}$, we see that $2-e\in
J(R)$. In light of~ \cite[Proposition 3.1]{Che1}, $2\in J(R)$;
hence, $e\in J(R)$. This implies that $e=0$, a contradiction.
Therefore every nonzero idempotent in $R$ is not the sum of two
units.

Conversely, assume that $(1)$ and $(2)$ hold. Then $R$ is an
exchange ring. According to \cite[Theorem 13]{LZ1}, $R/J(R)$ is
Boolean. Therefore we complete the proof by
Proposition~\ref{prop26}.
\end{proof}

Recall that a ring $R$ is {\em local} if $R$ has only one maximal right
ideal. As is well known, a ring $R$ is local if and only if
$a+b=1$ in $R$ implies that either $a$ or $b$ is invertible.

\begin{cor} Let $R$ be a local $*$-ring. Then the following are equivalent:
\begin{enumerate}
\item[\rm{(1)}] $R$ is strongly $J$-$*$-clean.
\item[\rm{(2)}] $R$ is strongly $J$-clean.
\item[\rm{(3)}] $R$ is uniquely clean.
\item[\rm{(4)}] $R/J(R)\cong \mathbb{Z}_2$.
\item[\rm{(5)}] $1$ is not the sum of two units in $R$.
\end{enumerate}
\end{cor}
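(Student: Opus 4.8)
The plan is to exploit the fact that a local ring has no idempotents other than $0$ and $1$, so that the ``strongly $*$-clean'' hypothesis appearing in the earlier propositions comes for free and all five conditions collapse to a single statement about the division ring $R/J(R)$. First I would record two preliminary facts. Since $R$ is local, its only idempotents are $0$ and $1$; in particular $R$ is abelian, and because $0^*=0$ and $1^*=1$, both are projections. Moreover $R$ is strongly clean: for $a\in R$ either $a\in U(R)$, giving $a=0+a$, or else $1-a\in U(R)$, giving $a=1+(a-1)$, and in either case the idempotent ($0$ or $1$) is a central projection commuting with the unit. Hence a local $*$-ring is automatically strongly $*$-clean, which discharges condition (1) of Propositions~\ref{prop26} and~\ref{str7}.

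Next I would obtain (1)$\Leftrightarrow$(2) directly from Proposition~\ref{prop1}: strong $J$-$*$-cleanness is strong $J$-cleanness together with strong $*$-cleanness, and the latter holds for free by the remark above. For (1)$\Leftrightarrow$(4) I would invoke Proposition~\ref{prop26}, which reduces (1) to ``$R/J(R)$ is Boolean''; since $R$ is local, $R/J(R)$ is a division ring, and a division ring is Boolean precisely when it is $\mathbb{Z}_2$ (in a division ring the equation $x^2=x$ forces $x=0$ or $x=1$), which is exactly (4). For (3)$\Leftrightarrow$(4) I would use the cited \cite[Theorem 2.1]{Che2}: $R$ is uniquely clean iff it is an abelian exchange ring with $R/M\cong\mathbb{Z}_2$ for every maximal ideal $M$. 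A local ring is an abelian exchange ring whose unique maximal (two-sided) ideal is $J(R)$, so that characterization reduces to $R/J(R)\cong\mathbb{Z}_2$, i.e.\ (4).

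Finally, for (4)$\Leftrightarrow$(5) I would use that in a local ring $U(R)=R\setminus J(R)$. If $R/J(R)\cong\mathbb{Z}_2$ and $1=u+v$ with $u,v\in U(R)$, then reducing modulo $J(R)$ forces $\overline{u}=\overline{v}=\overline{1}$, whence $\overline{1}=\overline{1}+\overline{1}=\overline{0}$, a contradiction; thus (4) implies (5). Conversely, if $R/J(R)\not\cong\mathbb{Z}_2$, then the division ring $R/J(R)$ contains some $\overline{x}\neq\overline{0},\overline{1}$; lifting $\overline{x}$ to $x\in R$, both $x$ and $1-x$ lie outside $J(R)$ and are therefore units, so $1=x+(1-x)$ exhibits $1$ as a sum of two units, proving (5) implies (4).

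The argument is essentially an assembly of the earlier propositions, and I expect the only steps needing genuine care to be the lifting in (5)$\Rightarrow$(4)---verifying that the chosen representative $x$ and its complement $1-x$ are both actually units---and the bookkeeping that in the local case ``for all maximal ideals $M$'' collapses to the single ideal $J(R)$. Neither is a serious obstacle, so the main value of the proof lies in the opening observation that local $*$-rings are automatically strongly $*$-clean, which is what makes every one of the five conditions equivalent to $R/J(R)\cong\mathbb{Z}_2$.
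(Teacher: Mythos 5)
Your proof is correct, and it shares the paper's one essential insight---that a local $*$-ring is automatically strongly $*$-clean because its only idempotents are $0$ and $1$, both of which are projections---but the logical assembly is genuinely different. The paper runs a cycle: $(1)\Rightarrow(2)$ trivially, $(2)\Leftrightarrow(3)\Leftrightarrow(4)$ by citing \cite[Lemma 4.2]{Che1} as a black box, $(4)\Rightarrow(5)$ as obvious, and $(5)\Rightarrow(1)$ by combining the strongly-$*$-clean observation with Corollary~\ref{pac9}, whose condition (2) specializes in a local ring exactly to statement (5), since $1$ is the only nonzero idempotent. You instead make (4) the hub: $(1)\Leftrightarrow(2)$ from Proposition~\ref{prop1}, $(1)\Leftrightarrow(4)$ from Proposition~\ref{prop26} plus the remark that a Boolean division ring is $\mathbb{Z}_2$, $(3)\Leftrightarrow(4)$ from \cite[Theorem 2.1]{Che2} together with the observation that $J(R)$ is the unique maximal ideal of a local ring, and $(4)\Leftrightarrow(5)$ by direct computation using $U(R)=R\setminus J(R)$. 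What your route buys is self-containedness: it avoids the external \cite[Lemma 4.2]{Che1} entirely, and your elementary $(5)\Rightarrow(4)$ argument (lifting $\overline{x}\neq\overline{0},\overline{1}$ and noting $x$ and $1-x$ are units) replaces the machinery behind Corollary~\ref{pac9}, which rests on \cite[Theorem 13]{LZ1}. What the paper's route buys is brevity: three of its four implications are one-line appeals to known results. All steps in your version check out, including the two points you flagged---the lifted $x$ and $1-x$ are indeed units since neither lies in $J(R)$, and every proper two-sided ideal of a local ring is contained in $J(R)$, so the quantifier over maximal ideals does collapse to $J(R)$ alone.
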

\begin{proof} $(1)\Rightarrow (2)$ is trivial.

$(2),(3)$ and $(4)$ are equivalent by \cite[Lemma 4.2]{Che1}.

$(4)\Rightarrow (5)$ is obvious.

$(5)\Rightarrow (1)$ Since $R$ is a local $*$-ring, $R$ is
strongly $*$-clean. Therefore the result follows from Corollary
~\ref{pac9}.
\end{proof}

A $*$-ring R is called {\it $*$-regular} if $R$ is (von Neumann)
regular and the involution is proper, equivalently for every $x$
in $R$ there exists a projection $p$ such that $xR = pR$ (see
\cite{Be}).

\begin{cor}\label{pac3}  Let $R$ be a $*$-regular ring. Then $R$ is strongly $J$-$*$-clean if and only if $R$ is Boolean.
\end{cor}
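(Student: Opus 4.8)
The plan is to handle the two implications separately; the forward direction is essentially immediate from the results already in hand, while the converse is where the properness of the involution (hidden in the hypothesis of $*$-regularity) must be used.

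For the direction ``strongly $J$-$*$-clean $\Rightarrow$ Boolean,'' I would first recall that a $*$-regular ring is in particular von Neumann regular, and every von Neumann regular ring is semiprimitive, so $J(R)=0$. Now if $R$ is strongly $J$-$*$-clean, then Proposition~\ref{prop26} tells us that $R/J(R)$ is Boolean. Since $J(R)=0$, this simply says that $R$ itself is Boolean, and this direction is done with no further work.

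For the converse, suppose $R$ is Boolean. Then $R$ is commutative, satisfies $2=0$, has every element idempotent, and again $J(R)=0$. Because $J(R)=0$, to exhibit $R$ as strongly $J$-$*$-clean it suffices to write each $a\in R$ as $a=a+0$ (the two summands trivially commute) and to check that $a$ is a projection; since $a^2=a$ holds by Booleanness, everything reduces to showing $a=a^*$ for every $a$. This self-adjointness claim is the heart of the matter and the step I expect to be the main obstacle, since nothing forces it until properness of $*$ is invoked.

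The hard part, then, will be to extract $a=a^*$ from properness. Given $a\in R$, note that $a^*$ is again idempotent and that $aa^*$ is a projection. Using commutativity together with $a^2=a$ and $(a^*)^2=a^*$, a short computation gives
\[
(a-aa^*)(a-aa^*)^* = (a-aa^*)(a^*-aa^*) = aa^* - aa^* - aa^* + aa^* = 0.
\]
Properness of the involution ($xx^*=0\Rightarrow x=0$) then forces $a=aa^*$; applying the same argument to $a^*$ yields $a^*=a^*a=aa^*$, whence $a=a^*$. Thus every element of $R$ is a projection, so every $a=a+0$ is the required sum, and $R$ is strongly $J$-$*$-clean, completing the proof.
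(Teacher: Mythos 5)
Your proof is correct, and the forward direction is exactly the paper's: regularity forces $J(R)=0$, so Proposition~\ref{prop26} (strongly $J$-$*$-clean implies $R/J(R)$ Boolean) gives the conclusion at once. For the converse you take a genuinely different route. The paper uses the other ingredient packaged in $*$-regularity, namely that for each $x$ (here each idempotent $e$) there is a projection $p$ with $eR=pR$; combining this with stable range one it writes $e=pu$ with $u$ a unit, notes that the only unit of a Boolean ring is $1$, concludes $e=p$, hence $R$ is strongly $*$-clean, and then applies Proposition~\ref{prop26}. You instead use the properness of the involution directly: in a Boolean (hence commutative, with every element idempotent) ring, the computation $(a-aa^*)(a-aa^*)^*=(a-aa^*)(a^*-aa^*)=0$ is valid since $(aa^*)^*=aa^*$ and all four terms collapse to $aa^*$, so properness gives $a=aa^*$, the same argument applied to $a^*$ gives $a^*=aa^*$, whence $a=a^*$ and every element is a projection; strong $J$-$*$-cleanness then follows from the trivial decomposition $a=a+0$ since $J(R)=0$. (Properness in your form $xx^*=0\Rightarrow x=0$ is equivalent to the usual $x^*x=0\Rightarrow x=0$, and in a commutative ring they coincide, so no gap there.) What your approach buys is self-containedness: it needs only the definition of a proper involution and elementary algebra, bypassing stable range one, the $xR=pR$ characterization, and Proposition~\ref{prop26} in the converse direction. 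What the paper's approach buys is uniformity with its general machinery: the projection-generation-plus-unit-cancellation pattern does not depend on commutativity of the ambient computation and routes the conclusion through the strongly $*$-clean results already established, which is the theme of the whole section.
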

\begin{proof} Suppose that $R$ is Boolean. Then $R/J(R)$ is
Boolean. For any idempotent $e\in R$, there exists a projection
$p$ such that $eR=pR$. As $R$ has stable range one, we have a unit
$u\in R$ such that $e=pu$. Clearly, $u=1$, and so $e=p$. This
implies that $R$ is strongly $*$-clean. According to
Proposition~\ref{prop26}, $R$ is strongly $J$-$*$-clean.

Conversely, assume that $R$ is strongly $J$-$*$-clean. Then
$R/J(R)$ is Boolean by Proposition~\ref{prop26}. But $R$ is
regular, and so $J(R)=0$. Therefore $R$ is Boolean.\end{proof}

\section{Uniqueness of Projections}

We start this section by studying the relationship between strong
$J$-$*$-cleanness and uniqueness which will be repeatedly used in
the sequel.

\begin{lem}\label{str3} Let $R$ be a $*$-ring. Then
the following are equivalent:
\begin{enumerate}
\item[\rm{(1)}] $R$ is strongly $J$-$*$-clean.
\item[\rm{(2)}] $R$ is uniquely clean and for any $a\in R$, $a-a^*\in
J(R)$.
\item[\rm{(3)}] $R$ is uniquely clean and for any $a\in R$, $a+a^*\in J(R)$.
\end{enumerate}
\end{lem}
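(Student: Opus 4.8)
The plan is to prove the cyclic chain of implications $(1)\Rightarrow(2)\Rightarrow(3)\Rightarrow(1)$, using the already-established Proposition~\ref{prop1} and the uniquely-clean characterization (via \cite[Corollary 16.4.16]{Ch}) as the backbone, with the involution identity $a\pm a^*\in J(R)$ as the new ingredient to be extracted. First I would prove $(1)\Rightarrow(2)$: assuming $R$ is strongly $J$-$*$-clean, Proposition~\ref{prop1} gives that $R$ is abelian and strongly $J$-clean, and \cite[Corollary 16.4.16]{Ch} yields that $R$ is uniquely clean. For the identity $a-a^*\in J(R)$, I would write $a=e+u$ with $e$ a projection, $u\in J(R)$, and $ea=ae$. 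Applying $*$ and using $e^*=e$ together with the fact that $*$ fixes the Jacobson radical ($J(R)^*=J(R)$, since $*$ is an anti-automorphism), I get $a^*=e+u^*$, so $a-a^*=u-u^*\in J(R)$. That handles the forward direction.

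The equivalence $(2)\Leftrightarrow(3)$ is where I expect the main conceptual content to sit, and the key lever is that in a uniquely clean ring $R/J(R)$ is Boolean, equivalently $2\in J(R)$ (this follows from \cite[Corollary 16.4.16]{Ch} or \cite[Proposition 16.4.15]{Ch}, which identify uniquely clean and strongly $J$-clean with Boolean factor). Once $2\in J(R)$, for any $a$ the element $a+a^*$ and $a-a^*$ differ by $2a^*\in J(R)$, so $a+a^*\in J(R)\iff a-a^*\in J(R)$. Thus $(2)$ and $(3)$ are interchangeable the moment one knows $2\in J(R)$, and since both $(2)$ and $(3)$ postulate that $R$ is uniquely clean, the fact $2\in J(R)$ is available in each. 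This makes $(2)\Leftrightarrow(3)$ essentially immediate after recording $2\in J(R)$.

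The substantive direction is $(3)\Rightarrow(1)$ (symmetrically $(2)\Rightarrow(1)$): I must manufacture strong $J$-$*$-cleanness from unique cleanness plus $a+a^*\in J(R)$. By Proposition~\ref{prop1} it suffices to show $R$ is abelian and $J$-$*$-clean, and a uniquely clean ring is already abelian, so the real task is to upgrade the idempotent in a clean/$J$-clean decomposition to a projection. Given $a\in R$, unique cleanness (hence strong $J$-cleanness, via the cited equivalences) furnishes a \emph{unique} idempotent $e$ with $a-e\in J(R)$ and $ea=ae$. The plan is to show $e$ is automatically a projection: apply $*$ to $a-e\in J(R)$ to get $a^*-e^*\in J(R)$ with $e^*$ idempotent; combine with $a+a^*\in J(R)$ to obtain $a-(-e^*)\in J(R)$, but $-e^*$ need not be idempotent, so instead I would use $1-e^*$ (a projection-candidate) and the relation $2\in J(R)$ to show that $e$ and $e^*$ are both idempotents lying in $a+J(R)$ and commuting with $a$; the uniqueness clause of uniquely clean rings then forces $e=e^*$, making $e$ a projection.

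The hard part will be pinning down exactly which idempotent the uniqueness applies to, since uniqueness in a uniquely clean ring is stated for the decomposition $a=e+u$ with $u$ a unit, and I need to transport this to the $J$-radical decomposition and verify that $e^*$ (or $1-e^*$) lands in the correct coset modulo $J(R)$ and commutes with $a$; the identity $2\in J(R)$ is what reconciles the sign discrepancy between $e$ and $1-e^*$. Once $e=e^*$ is secured, $e$ is a projection, $a=e+(a-e)$ exhibits strong $J$-$*$-cleanness, and Proposition~\ref{prop1} closes the loop. I would then note $(2)\Rightarrow(1)$ follows identically, or simply by $(2)\Rightarrow(3)\Rightarrow(1)$, completing the cycle.
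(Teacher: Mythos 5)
Your proposal is correct, but in the decisive implication it takes a genuinely different route from the paper. For $(1)\Rightarrow(2)$ and $(2)\Leftrightarrow(3)$ you match the paper exactly (the paper gets $2\in J(R)$ from a cited lemma of Nicholson--Zhou and writes $a+a^*=(a-a^*)+2a^*$). The difference is in $(3)\Rightarrow(1)$. The paper never touches the uniqueness clause there: it applies the hypothesis to an arbitrary \emph{idempotent} $e$, obtaining $e+e^*\in J(R)$, and then exploits the identity $(e-e^*)(e+e^*)^2=e-e^*$ (valid because $R$, being uniquely clean, is abelian, so $e$ and $e^*$ are commuting central idempotents), whence $(e-e^*)\bigl(1-(e+e^*)^2\bigr)=0$ with $1-(e+e^*)^2\in U(R)$, forcing $e=e^*$; thus every idempotent is a projection and strong $J$-cleanness upgrades immediately. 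You instead work with a general element $a$: from $a-e\in J(R)$ you derive $a-e^*\in J(R)$ using $J(R)^*=J(R)$, $a+a^*\in J(R)$ and $2\in J(R)$, and then force $e=e^*$ by the uniqueness of the idempotent, transported from clean decompositions to $J$-decompositions via the correspondence $e\leftrightarrow 1-e$ (since $a-e\in J(R)$ gives $a-(1-e)=(2e-1)+(a-e)\in U(R)$). Both arguments are sound and rest on the same backbone (uniquely clean $\Rightarrow$ abelian, strongly $J$-clean, $2\in J(R)$). The paper's computation is shorter once the identity is spotted and needs the hypothesis only on idempotents; your argument avoids having to find that identity and instead reuses a uniqueness-transport mechanism that the paper itself employs later, in the proof of Theorem~\ref{corJ} $(2)\Rightarrow(3)$ via \cite[Theorem 20]{NZ}, so it is arguably the more systematic of the two. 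The step you flag as ``the hard part'' is handled correctly by your outline: $2\in J(R)$ reconciles the sign ($-e^*\equiv e^*$ modulo $J(R)$), and the $1-e$ trick supplies the uniqueness of the $J$-decomposition.
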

\begin{proof} $(1)\Rightarrow
(2)$ In view of Proposition ~\ref{prop1}, $R$ is an abelian
strongly $J$-clean ring, whence it is uniquely
clean~\cite[Corollary 3.4]{Che2}. For any $a\in R$, there exist a
projection $e\in R$ and an element $u\in J(R)$ such that
$a=e+u,ae=ea$. Thus, $a^*=e^*+u^*$. As $\big(J(R)\big)^*\subseteq
J(R)$, we see that $a-a^*=(e-e^*)+(u-u^*)\in J(R)$.

$(2)\Rightarrow (3)$ Since $R$ is uniquely clean, it follows from
\cite[Lemma 18]{NZ}, $2\in J(R)$. Therefore $a+a^*=(a-a^*)+2a^*\in
J(R)$, as desired.

$(3)\Rightarrow (1)$ Since $R$ is uniquely clean, $R$
is abelian strongly $J$-clean (see \cite[Corollary 16.4.16]{Ch}). For any idempotent $e\in R$,
$e+e^*\in J(R)$. Thus, $(e-e^*)(e+e^*)^2=e-e^*$; hence,
$(e-e^*)\big(1-(e+e^*)^2\big)=0$. This implies that $e=e^*$, i.e.,
every idempotent is a projection. Hence, $R$ is strongly
$J$-$*$-clean. \end{proof}

By the referee's suggestion, we say that a $*$-ring is {\em uniquely
$J$-$*$-clean} provided that for any $a\in R$, there exists a
unique projection $e\in R$ such that $a-e\in J(R)$. We come now to
prove the following main result.

\begin{thm}\label{corJ} Let $R$ be a $*$-ring. Then the following are equivalent:
\begin{enumerate}
\item[\rm{(1)}] $R$ is strongly $J$-$*$-clean.
\item[\rm{(2)}] $R$ is uniquely clean and $R$ is strongly
$*$-clean.
\item[\rm{(3)}] $R$ is uniquely strongly
$*$-clean.
\item[\rm{(4)}] $R$ is uniquely $J$-$*$-clean.
\item[\rm{(5)}] For any $a\in R$, there exists a unique idempotent $e\in R$
such that $a-e\in U(R), ae=ea, ae^*=e^*a$ and $e-e^*\in J(R)$.
\end{enumerate}
\end{thm}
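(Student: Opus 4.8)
The plan is to prove the five statements equivalent by a cycle of implications, leaning heavily on the already-established Proposition~\ref{prop1} (strongly $J$-$*$-clean $\Leftrightarrow$ abelian strongly $J$-clean $\Leftrightarrow$ strongly $J$-clean and strongly $*$-clean), Proposition~\ref{prop26}, Lemma~\ref{str3}, and \cite[Theorem 2.2]{LZ} (strongly $*$-clean $\Rightarrow$ $R$ abelian and every idempotent is a projection). The group $(1)\Leftrightarrow(2)\Leftrightarrow(3)\Leftrightarrow(5)$ should go through essentially mechanically; the genuine work is the converse half of $(1)\Leftrightarrow(4)$, where the bare uniqueness of a \emph{projection} lift must be upgraded to the full abelian/uniquely-clean structure.

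First I would dispose of the easy arrows. For $(1)\Rightarrow(2)$, Proposition~\ref{prop1} gives that $R$ is abelian strongly $J$-clean, whence uniquely clean by \cite[Corollary 16.4.16]{Ch}, and strongly $*$-clean. For $(2)\Rightarrow(3)$, \cite[Theorem 2.2]{LZ} makes $R$ abelian with every idempotent a projection, so the unique idempotent $e$ furnished by uniquely clean (with $a-e\in U(R)$) is automatically a projection commuting with $a$, and its uniqueness as a projection is inherited from its uniqueness as an idempotent. For $(1)\Rightarrow(4)$ the existence is the definition of strong $J$-$*$-cleanness, while uniqueness follows because $R$ is abelian (Proposition~\ref{prop1}): two projections $e,f$ with $e-f\in J(R)$ then commute, so $(e-f)^{3}=e-f$, and as $1-(e-f)^{2}\in U(R)$ we get $e=f$.

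The clean part of the argument is the role of the involution in $(5)$. The key observation is that the four conditions imposed on $e$ in $(5)$ are symmetric under $*$: if $e$ is an idempotent with $a-e\in U(R)$, $ae=ea$, $ae^{*}=e^{*}a$ and $e-e^{*}\in J(R)$, then $e^{*}$ is again idempotent and satisfies the same four conditions for $a$, since $a-e^{*}=(a-e)+(e-e^{*})\in U(R)$ (unit plus radical element), the roles of $ae=ea$ and $ae^{*}=e^{*}a$ simply swap, and $e^{*}-e\in J(R)$. By the \emph{uniqueness} in $(5)$ this forces $e^{*}=e$, so $e$ is a projection; hence $(5)$ collapses to exactly the uniquely-strongly-$*$-clean condition $(3)$. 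Conversely $(3)\Rightarrow(5)$ holds because strong $*$-cleanness (from $(3)$) makes every idempotent a projection, so any competitor idempotent in $(5)$ is a projection commuting with $a$ and is pinned down by $(3)$. This neatly gives $(3)\Leftrightarrow(5)$, and $(5)\Rightarrow(1)$ then runs through uniquely clean and Proposition~\ref{prop26}.

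The main obstacle is $(4)\Rightarrow(1)$. From $J$-$*$-cleanness alone one extracts a lot: every $\overline{a}$ is idempotent in $R/J(R)$, so $R/J(R)$ is Boolean and $2\in J(R)$; idempotents lift modulo $J(R)$; and since the lifting projection satisfies $e^{*}=e$, one has $a-a^{*}=(a-e)-(a^{*}-e)\in J(R)$ for every $a$. By Lemma~\ref{str3} it then suffices to prove $R$ is uniquely clean, and given the above the only missing ingredient is that $R$ is abelian. I would reduce abelianness to the single claim that \emph{every idempotent is a projection}: once that is known, for any idempotent $e$ and unit $u$ the idempotent $ueu^{-1}$ is a projection congruent to $e$ modulo $J(R)$, so uniqueness of the projection lift gives $ueu^{-1}=e$; taking $u=1+er(1-e)$ (a unit, as $er(1-e)$ squares to $0$) yields $er(1-e)=0$, and symmetrically $(1-e)re=0$, so $e$ is central. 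The delicate point I expect to be hardest is precisely establishing that claim: for an idempotent $g$ one knows $g\equiv g^{*}\pmod{J(R)}$ and that $g,g^{*}$ share the same projection lift, but deducing $g=g^{*}$ must exploit the interaction of $*$ with the uniqueness of projection lifts in an essential way (merely conjugating $g$ to its projection by a unit $u\equiv 1$ reduces $g=g^{*}$ to the commuting of that projection with the self-adjoint unit $u^{*}u$, which the factor $2\in J(R)$ prevents one from reading off cheaply). After this step Lemma~\ref{str3} closes the cycle.
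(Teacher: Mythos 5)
Your cycle $(1)\Rightarrow(2)\Rightarrow(3)\Leftrightarrow(5)\Rightarrow(1)$ together with $(1)\Rightarrow(4)$ is sound, and your handling of $(5)$ is genuinely nicer than the paper's: you observe that the four conditions in $(5)$ are symmetric under $*$ --- if $e$ qualifies then so does $e^*$, since $a-e^*=(a-e)+(e-e^*)$ is a unit plus a radical element, the two commutation conditions swap, and $e^*-e\in J(R)$ --- so uniqueness alone forces $e=e^*$ and $(5)$ collapses onto $(3)$. The paper instead proves $(5)\Rightarrow(3)$ by explicitly manufacturing a projection $g=ee^*\bigl[1+(e^*-e)^*(e^*-e)\bigr]^{-1}$ from the idempotent $e$ and verifying $g^*=g=g^2$, $ag=ga$ and $a-g\in U(R)$ by computation; your symmetry trick eliminates that entire calculation. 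Your reduction of $(4)\Rightarrow(1)$ is also correctly set up: under $(4)$ one does get $R/J(R)$ Boolean, $2\in J(R)$, $a-a^*\in J(R)$ for all $a$, and unique projection lifts, and once every idempotent is known to be a projection, uniqueness of lifts makes $R$ abelian and uniquely clean, after which Lemma~\ref{str3} closes the loop.

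But you never prove that every idempotent is a projection, and this claim is precisely the mathematical content of $(4)\Rightarrow(1)$: you flag it as ``the delicate point,'' describe a failed attempt, and stop. That is a genuine gap, not a loose end, because none of the soft facts you established (the congruence $e\equiv e^*\pmod{J(R)}$, the fact that $e$ and $e^*$ share the same projection lift) forces $e=e^*$ by themselves. The paper closes it with a two-sided construction that pits uniqueness against two \emph{different} explicitly built projections. For an idempotent $e$, since $e-e^*\in J(R)$, the element $z=1+(e^*-e)(e-e^*)$ is a self-adjoint unit commuting suitably with $e$ and $e^*$, and $f:=e^*ez^{-1}$ is checked to be a projection with $e-f=(e-e^*)e^*ez^{-1}\in J(R)$; symmetrically, $f':=\bigl[1+(e-e^*)(e^*-e)\bigr]^{-1}ee^*$ is a projection with $e-f'\in J(R)$. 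Applying the uniqueness in $(4)$ to the element $e$ itself forces $f=f'$; clearing the inverses turns this into the identity $e^*ee^*e=ee^*ee^*$, from which one computes $(e-e^*)^3-(e-e^*)=-ee^*e+e^*ee^*$ and then $(e-e^*)\bigl[(e-e^*)^2-1\bigr]\bigl[(e+e^*)-1\bigr]=0$. Since $e-e^*$ and $e+e^*$ lie in $J(R)$ (the latter because $2\in J(R)$), the last two factors are units, whence $e=e^*$. Without this step, or some equivalent way of extracting an equation from the uniqueness hypothesis, your proof of $(4)\Rightarrow(1)$ does not close, and since $(4)$ sits in your argument only as a target of the trivial implication $(1)\Rightarrow(4)$, the theorem as a whole remains unproved.
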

\begin{proof} $(1)\Rightarrow (2)$ is clear from Lemma~\ref{str3} and Proposition~\ref{prop1}.

$(2)\Rightarrow (3)$ For any $a\in R$, it follows from
\cite[Theorem 20]{NZ} that there exists a unique idempotent $e\in
R$ such that $a-e\in J(R)$. Since $R$ is strongly $*$-clean, $e$
is a central projection. Therefore there exists a unique
projection $e\in R$ such that $a-e\in U(R)$ and $ae=ea$, as
required.

$(3)\Rightarrow (1)$ Clearly, $R$ is strongly $*$-clean. In light
of \cite[Theorem 2.2]{LZ}, $R$ is abelian and every idempotent in
$R$ is a projection. Thus, $R$ is uniquely clean. Let $a\in R$.
According to ~\cite[Theorem 20]{NZ}, there exists an idempotent
$e\in R$ such that $a-e\in J(R)$. Thus, we have a projection $e\in
R$ such that $a-e\in J(R)$ and $ae=ea$. Therefore $R$ is strongly
$J$-$*$-clean.

$(1)\Rightarrow (4)$ is trivial by Lemma~\ref{str3}.

$(4)\Rightarrow (1)$ For any idempotent $e$ in $R$, there exists a projection $g\in R$ such that $e-g\in J(R)$. Thus, $e^*-g\in J(R)$.
Therefore $e-e^*=(e-g)-(e^*-g)\in J(R)$. Clearly, there exists a projection $h\in R$ such that $2=h+w$ where $w\in J(R)$. Thus, $1-h=-1+w\in U(R)$.
Hence $1-h=1$, and so $h=0$. This implies that $2\in J(R)$. As a result,
we deduce that $e+e^*=(e-e^*)+2e^*\in J(R)$.

Set $z=1+(e-e^*)^*(e-e^*)$. Write $t=z^{-1}$. Since $z^*=z$,
$t^*=t$. Also $e^*z=e^*ee^*=ze^*$, and so $e^*t=te^*$, and
$et=te$. Set $f=e^*et=te^*e$. Then $$f^*=f,
f^2=e^*ete^*et=e^*ee^*(tet)=e^*ztet=e^*et=f,
fe=f~\mbox{and}~ef=ee^*et=ezt=e.$$ Now $e=f+(e-f)$ and
$e-f=e-e^*et=ee^* e t-e^*et=(e-e^*)e^*et\in J(R)$. Here
$f=f^*=f^2$. In addition, $f=e^*e[1+(e^*-e)(e-e^*)]^{-1}$.

Set $z^\prime=1+(e^*-e)^*(e^*-e)$. Write
$t^\prime=(z^\prime)^{-1}$. Since $(z^\prime)^*=z^\prime$,
$(t^\prime)^*=t^\prime$. Also $e z^\prime=ee^*e=z^\prime e$. Set
$f^\prime=ee^*t^\prime=t^\prime e e^*$. As in the preceding proof,
we see that $f^\prime=(f^\prime)^2=(f^\prime)^*$ and
$ef^\prime=f^\prime$, $f^\prime e=e$. In addition,
$$e-f^\prime=f^\prime e-f^\prime=t^\prime e e^*(e-e^*)\in J(R),$$
where $f^\prime=[1+(e-e^*)(e^*-e)]^{-1} ee^*$.

Thus we get $e=f+(e-f)=f^\prime+(e-f^\prime)$ with $e-f,
e-f^\prime \in J(R)$, $f$ and $f^\prime$ are projections. By the
uniqueness, we get $$e^*e\big[ 1+(e^*-e)(e-e^*)\big]^{-1}= \big[
1+(e-e^*)(e^*-e)\big]^{-1}ee^*.$$ This shows that
$$[1+(e-e^*)(e^*-e)]e^*e = ee^*[1+(e^*-e)(e-e^*)].$$ Obviously,
$(e-e^*)(e^*-e)e^*e=-e^*e+e^*ee^*e$ and
$ee^*(e^*-e)(e-e^*)=-ee^*+ee^*ee^*$. Consequently,
$e^*ee^*e=ee^*ee^*$. One easily checks that
$$\begin{array}{c}
(e-e^*)^3-(e-e^*)=-ee^*e+e^*ee^*;\\
\big[ (e-e^*)^3-(e-e^*)\big] (e+e^*)=(e-e^*)^3-(e-e^*).
\end{array}$$ Thus
$(e-e^*)((e-e^*)^2-1)((e+e^*)-1)=0$. As $e-e^*$, $e+e^*\in J(R)$,
we see that $(e-e^*)^2-1, (e+e^*)-1\in U(R)$, and so $e=e^*$.
Therefore every idempotent is a projection,  and so $R$ is abelian
by \cite[Lemma 2.1]{LZ}, hence the result follows.

$(2)+(3) \Rightarrow (5)$ By (3), there exists
an idempotent $f\in R$ such that $a-f\in U(R), af=fa, af^*=f^*a$
and $f-f^*=0\in J(R)$. The uniqueness of
such idempotent immediately follows from the uniquely cleanness of $R$.

$(5)\Rightarrow (3)$  Let $a\in R$. Then there exists an idempotent $e\in R$ such that $u:=a-e\in
U(R), ae=ea, ae^*=e^*a$ and $e-e^*\in J(R)$. Let
$p=1+(e^*-e)^*(e^*-e)$. As $ae=ea, ae^*=e^*a$, we see that
$ap=pa$. Clearly, $p\in U(R)$. Write $q=p^{-1}$. Then $p^*=p$, this implies $q^*=q$. Further,
$$ep=e(1-e-e^*+ee^*+e^*e)=ee^*e=(1-e-e^*+ee^*+e^*e)e=pe.$$
Thus, we
see that $eq=qe$ and $e^*q=qe^*$. Set $g=ee^*q$. Then
$$g^2=ee^*qee^*q=qee^*ee^*q=qpee^*q=ee^*q=g.$$
In addition,
$g^*=q^*ee^*=ee^*q=g$, i.e., $g\in R$ is a projection. As $aq=qa$,
we see that $ag=ga$. One easy checks that $eg=g$ and
$ge=ee^*qe=ee^*eq=epq=e$. This implies that
$$e-g=e-ee^*p^{-1}=e\big(e(e^*-e)^*-1\big)(e^*-e)p^{-1}\in J(R),$$
and so $e-g+u\in U(R)$. Thus we have a projection $g\in R$ such that $a-g\in U(R)$ and $ga=ag$. The uniqueness in (5) completes the proof.
\end{proof}

A ring element is said to be {\em clean} if it is a sum of a unit and an idempotent. Other related concepts can be defined for ring elements analogously. With such definitions in mind, let us note that the conditions of Theorem~\ref{corJ} are not equivalent when referring to a single element. The following example displays a ring with an element that is uniquely strongly $*$-clean and not uniquely clean. Thus, conditions (2) and (3) are not equivalent when applied to a single element.

 Let $\left( \begin{array}{cc} 1&0\\
0&0
\end{array}
\right)\in M_2({\Bbb Z})$, where $M_2({\Bbb Z})$ is a $*$-ring
with the involution $*: A\mapsto A^T$. One easily checks that
there exists a unique projection $\left( \begin{array}{cc} 0&0\\
0&1
\end{array}
\right)\in M_2({\Bbb Z})$ such that $\left( \begin{array}{cc} 1&0\\
0&0
\end{array}
\right)-\left( \begin{array}{cc} 0&0\\
0&1
\end{array}
\right)=\left( \begin{array}{cc} 1&0\\
0&-1
\end{array}
\right)\in M_2({\Bbb Z})$ is invertible. But we have an
idempotent $\left( \begin{array}{cc} 0&1\\
0&1
\end{array}
\right)\in M_2({\Bbb Z})$ such that $\left( \begin{array}{cc} 1&0\\
0&0
\end{array}
\right)-\left( \begin{array}{cc} 0&1\\
0&1
\end{array}
\right)=\left( \begin{array}{cc} 1&-1\\
0&-1
\end{array}
\right).$\\



We note that the unique projection in (3) or (4) of
Theorem~\ref{corJ} can not be replaced by the unique idempotent
even for a commutative $*$-ring as the following example shows:

\begin{ex} \rm Let $R=\big \{\left(\begin{array}{ll}
 0& 0\\
 0& 0
\end{array} \right), \left(\begin{array}{ll}
 1& 0\\
 0& 1
\end{array} \right),\left(\begin{array}{ll}
 1& 1\\
 0& 0
\end{array} \right),\left(\begin{array}{ll}
 0& 1\\
 0& 1
\end{array} \right)\big\}$ where $0,1\in \mathbb{Z}_2$.  Define $*: R\rightarrow R$, $\left(\begin{array}{ll}
 a& b\\
 c& d
\end{array} \right) \mapsto \left(\begin{array}{cc}
 a+b& b\\
 a+b+c+d& b+d
\end{array} \right)$. Then $R$ is a commutative $*$-ring with the usual matrix addition and multiplication. In fact, $R$ is Boolean, and so, for any $a\in R$, there exists a unique idempotent $e\in R$ such that $a-e\in U(R)$ (or, $a-e\in J(R)$) and $ae=ea$. But $R$ is not strongly $J$-$*$-clean, even not a $*$-clean ring.
\end{ex}

\begin{thm}\label{str34} Let $R$ be a $*$-ring. Then $R$ is strongly $J$-$*$-clean if and only if
\begin{enumerate}
\item[\rm{(1)}] $R$ is strongly $*$-clean;
\item[\rm{(2)}] $J(R)=\{ x\in R~|~1-x\in U(R)\}$.
\end{enumerate}
\end{thm}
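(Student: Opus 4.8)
```latex
The plan is to leverage the earlier characterizations, especially
Proposition~\ref{prop26}, which already tells us that $R$ is strongly
$J$-$*$-clean if and only if $R$ is strongly $*$-clean and $R/J(R)$ is
Boolean. So the task reduces to showing that, \emph{given} strong
$*$-cleanness (condition (1) here, which is common to both sides), the
condition ``$R/J(R)$ is Boolean'' is equivalent to the description
$J(R)=\{x\in R\mid 1-x\in U(R)\}$ in condition (2). One inclusion is
automatic for any ring: if $x\in J(R)$ then $1-x\in U(R)$, so
$J(R)\subseteq\{x\mid 1-x\in U(R)\}$ always holds, and the whole content
of (2) is the reverse inclusion $\{x\mid 1-x\in U(R)\}\subseteq J(R)$.

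First I would prove the forward direction. Assume $R$ is strongly
$J$-$*$-clean. Then (1) holds by Proposition~\ref{prop1}, and by
Proposition~\ref{prop26} the quotient $R/J(R)$ is Boolean. To get (2),
take any $x$ with $1-x\in U(R)$ and pass to $\overline{R}=R/J(R)$. In a
Boolean ring every element $\overline{x}$ satisfies
$\overline{x}^2=\overline{x}$, so $\overline{x}(\overline{1}-\overline{x})=\overline{0}$;
since $\overline{1}-\overline{x}$ is a unit in $\overline{R}$ (units lift,
and $1-x\in U(R)$ forces $\overline{1}-\overline{x}\in U(\overline{R})$),
we conclude $\overline{x}=\overline{0}$, i.e.\ $x\in J(R)$. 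This gives the
nontrivial inclusion and hence (2).

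For the converse, assume (1) and (2). Since $R$ is strongly $*$-clean it
is strongly clean, and by Proposition~\ref{prop26} it suffices to show
$R/J(R)$ is Boolean. The natural route is to fix any $a\in R$ and show
$a^2-a\in J(R)$, which by (2) amounts to checking that
$1-(a^2-a)\in U(R)$. Here I would use strong $*$-cleanness to write
$a=e+u$ with $e$ a projection, $u\in U(R)$, and $eu=ue$; strong
$*$-cleanness also guarantees (via \cite[Theorem 2.2]{LZ}) that $R$ is
abelian and every idempotent is a projection, so $e$ is central. A short
computation expresses $a^2-a$ in terms of the commuting pieces $e$ and
$u$, and the goal is to show that adding $1$ to its negative yields a
unit. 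The main obstacle is precisely this step: translating the
abstract hypothesis (2) into a usable criterion and then verifying that
the relevant element $1-(a^2-a)$ is invertible for \emph{every} $a$.
I expect one must show that $a(a-1)$ lands in $\{x\mid 1-x\in U(R)\}$,
i.e.\ that $1-a(a-1)=1-a^2+a$ is a unit, using the commuting clean
decomposition of $a$; once $a^2-a\in J(R)$ is established for all $a$,
$R/J(R)$ is Boolean and Proposition~\ref{prop26} finishes the proof.
```
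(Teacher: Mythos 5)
Your forward direction is correct, and it is in fact simpler than the paper's own argument: the paper proves the inclusion $\{x\mid 1-x\in U(R)\}\subseteq J(R)$ via uniquely clean rings, \cite[Lemma 17]{NZ} and a corner-ring argument, whereas your route through Proposition~\ref{prop26} (in a Boolean quotient, $\overline{x}(\overline{1}-\overline{x})=\overline{0}$ and $\overline{1}-\overline{x}$ invertible force $\overline{x}=\overline{0}$) is perfectly valid; the only thing you need there is that the image of a unit of $R$ is a unit of $R/J(R)$, which holds for any surjection, so the parenthetical ``units lift'' is unnecessary.

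The converse, however, has a genuine gap, and you essentially admit it: your plan is to prove $R/J(R)$ Boolean by showing $1-(a^2-a)\in U(R)$ for every $a$, starting from a decomposition $a=e+u$ with $e$ a central projection and $u\in U(R)$ commuting with $e$. This step does not go through. Since $e$ is central, $R\cong eR\times(1-e)R$, and the two components of $1-a^2+a$ are $e(1-u-u^2)$ and $(1-e)(1+u-u^2)$; nothing in the hypotheses forces either of these to be invertible, so decomposing $a$ itself leads nowhere. The missing idea, which is exactly the paper's proof, is to apply strong $*$-cleanness to $a-1$ rather than to $a$: write $(a-1)-e\in U(R)$ with $e$ a projection and $e(a-1)=(a-1)e$. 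Then $1-(a-e)=-\big((a-1)-e\big)\in U(R)$, so hypothesis (2) yields $a-e\in J(R)$ \emph{directly}, and since $ea=ae$, the element $a$ is the sum of a projection and an element of $J(R)$ that commute. This proves strong $J$-$*$-cleanness in one step, with no need to pass through Booleanness of $R/J(R)$ or Proposition~\ref{prop26} at all; the shift $a\mapsto a-1$ is precisely the device that converts the abstract hypothesis (2) into the membership $a-e\in J(R)$ that your outline could not reach.
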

\begin{proof} Suppose that $R$ is strongly
$J$-$*$-clean. By virtue of Lemma ~\ref{str3}, $R$ is uniquely
clean. Obviously, $J(R)\subseteq \{ x\in R~|~1-x\in U(R)\}.$
Suppose that $1-x\in U(R)$. If $x\not\in J(R)$, then $0\neq
xR\nsubseteq J(R)$. In view of \cite[Lemma 17]{NZ}, there exists
an idempotent $0\neq e\in xR$. Write $e=xr$ for an $r\in R$. Then
$e=(exe)(ere)$ as every idempotent in $R$ is central. It is easy
to see that $R$ is directly finite. Thus, $exe\in U(eRe)$. In view
of \cite[Corollary 5]{NZ}, $eRe$ is uniquely clean. Clearly,
$0+exe=e+e(x-1)e$. The uniqueness implies that $0=e$, a
contradiction. Therefore $x\in J(R)$, and so $\{ x\in R~|~x-1\in
U(R)\}\subseteq J(R)$. Thus, $J(R)=\{ x\in R~|~1-x\in U(R)\}$.

Conversely, assume that $(1)$ and $(2)$ hold. Let $a\in R$. Then
we can find a projection $e\in R$ such that $(a-1)-e\in U(R)$ and
$e(a-1)=(a-1)e$. That is, $(1-a)+e\in U(R)$. As $1-(a-e)\in U(R)$,
by hypothesis, $a-e\in J(R)$. In addition, $ea=ae$. Therefore $R$
is strongly $J$-$*$-clean.\end{proof}

\begin{cor}\label{str8} Let $R$ be a $*$-ring. Then $R$ is strongly $J$-$*$-clean if and only if
\begin{enumerate}
\item[\rm{(1)}] $R$ is strongly $*$-clean;
\item[\rm{(2)}] For any $a\in R$, $a+a^*\in J(R)$;
\item[\rm{(3)}] $J(R)=\{ x\in R~|~1+xx^*\in U(R)\}$.
\end{enumerate}
\end{cor}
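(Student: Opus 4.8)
The goal is to characterize strongly $J$-$*$-clean rings by the three conditions in Corollary~\ref{str8}, and the natural strategy is to leverage Theorem~\ref{str34}, which already gives the equivalence with strong $*$-cleanness plus the radical description $J(R)=\{x\mid 1-x\in U(R)\}$. So the plan is to show that, in the presence of strong $*$-cleanness, the single radical condition of Theorem~\ref{str34} is equivalent to the pair of conditions (2) and (3) here. Let me check the forward direction first.

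Let me look at what we need. Assume $R$ is strongly $J$-$*$-clean. Condition (1) is immediate from Proposition~\ref{prop1}. For condition (2), since $R$ is uniquely clean by Lemma~\ref{str3}, we have $2\in J(R)$ (via \cite[Lemma 18]{NZ}), and Lemma~\ref{str3} gives $a-a^*\in J(R)$; hence $a+a^*=(a-a^*)+2a^*\in J(R)$. For condition (3): by Theorem~\ref{str34}, $J(R)=\{x\mid 1-x\in U(R)\}$, so I must show this set equals $\{x\mid 1+xx^*\in U(R)\}$. If $x\in J(R)$, then $xx^*\in J(R)$ (as $J(R)$ is an ideal and $x^*\in J(R)$ too, since $(J(R))^*\subseteq J(R)$), whence $1+xx^*\in U(R)$. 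Conversely, suppose $1+xx^*\in U(R)$. The key observation is that $xx^*+x^*x$ or the relation $x+x^*\in J(R)$ from (2) should force $xx^*\in J(R)$; indeed, I expect to combine $x+x^*\in J(R)$ with the unit condition to deduce $x\in J(R)$. This reverse inclusion is where I expect the main work to lie.

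The cleanest way to handle the reverse inclusion is to verify that condition (3) together with condition (2) reconstructs the radical description of Theorem~\ref{str34}. I would argue as follows for the converse direction: assume (1), (2), (3) hold, and aim to verify condition (2) of Theorem~\ref{str34}, namely $J(R)=\{x\mid 1-x\in U(R)\}$. One inclusion is automatic. For the other, take $x$ with $1-x\in U(R)$; I want $x\in J(R)$. Using (2), $x+x^*\in J(R)$, so modulo $J(R)$ we have $\overline{x^*}=-\overline{x}=\overline{x}$ (since $2\in J(R)$, which follows because $R$ is strongly $*$-clean with (2) forcing uniquely clean behavior). Then $\overline{1+xx^*}=\overline{1+x^2}$, and since $R/J(R)$ is Boolean I can analyze $\overline{x}$ directly: in a Boolean ring $\overline{x}^2=\overline{x}$, so $\overline{1+xx^*}=\overline{1}+\overline{x}$. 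The hypothesis $1+xx^*\in U(R)$ makes $\overline{1}+\overline{x}$ a unit in the Boolean ring $R/J(R)$, forcing $\overline{x}=\overline{0}$, i.e.\ $x\in J(R)$. Symmetrically, $1-x\in U(R)$ gives the needed conclusion, and Theorem~\ref{str34} then applies.

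The step I expect to be the genuine obstacle is pinning down exactly how $R/J(R)$ becomes Boolean (or at least commutative with $\overline{x}^2=\overline{x}$) purely from conditions (1)--(3) in the converse, without circularly invoking the conclusion; I would want to establish $2\in J(R)$ and the Boolean structure of $R/J(R)$ from strong $*$-cleanness plus (2) and (3) before deploying the unit argument. A safe route is to first show conditions (1)--(3) imply every idempotent is a projection and $R/J(R)$ is Boolean via Proposition~\ref{prop26}, and only then verify the radical identity; alternatively one checks that (3) directly yields that each element of $\{x\mid 1-x\in U(R)\}$ lies in $J(R)$. Once the Boolean quotient is in hand, the remaining computations reducing $1+xx^*$ modulo $J(R)$ are routine, and invoking Theorem~\ref{str34} closes the argument in both directions.
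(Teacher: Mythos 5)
Your converse direction is the fatal problem, and it is the circularity you yourself flag as "the genuine obstacle" without ever closing it. Assuming only (1)--(3), you have no access to the statement that $R/J(R)$ is Boolean: given (1), Boolean-ness of $R/J(R)$ is \emph{equivalent} to the conclusion you are trying to prove (Proposition~\ref{prop26}), so your "safe route" of first deriving it "via Proposition~\ref{prop26}" presupposes exactly that conclusion, and your fallback ("alternatively one checks that (3) directly yields that each element of $\{x\mid 1-x\in U(R)\}$ lies in $J(R)$") is a restatement of the goal, not an argument. There is a secondary incompleteness as well: in the forward direction you leave the inclusion $\{x\mid 1+xx^*\in U(R)\}\subseteq J(R)$ at "I expect to combine\dots". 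Ironically, the Boolean-quotient computation you wrote down is legitimate precisely there, where Proposition~\ref{prop26} does apply because $R$ is already known to be strongly $J$-$*$-clean; you placed it on the wrong side of the equivalence.

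The single missing idea, which the paper uses in both directions and which requires no Boolean structure at all, is the identity
$$(1+x)(1+x^*)=1+x+x^*+xx^*, \qquad\text{i.e.}\qquad 1+xx^*=(1+x)(1+x^*)-(x+x^*).$$
Since by (2) the element $x+x^*$ lies in $J(R)$, adding or subtracting it from a unit yields a unit, so this identity converts freely between the conditions $1+x\in U(R)$ and $1+xx^*\in U(R)$. For the converse of the corollary: if $1+x\in U(R)$, then $1+x^*\in U(R)$ (involutions preserve units), so $(1+x)(1+x^*)\in U(R)$, hence $1+xx^*\in U(R)$ by the identity, hence $x\in J(R)$ by (3); replacing $x$ by $-x$ shows $\{x\mid 1-x\in U(R)\}\subseteq J(R)$, and Theorem~\ref{str34} finishes. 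For the forward direction the same identity is read the other way: $1+xx^*\in U(R)$ and $x+x^*\in J(R)$ give $(1+x)(1+x^*)\in U(R)$, whence $1+x$ is right (and symmetrically left) invertible, and Theorem~\ref{str34} yields $x\in J(R)$. This elementary computation is what your proposal needed and never found.
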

\begin{proof} Suppose that $R$ is strongly $J$-$*$-clean. By virtue of Proposition~\ref{prop1} and Lemma ~\ref{str3},
$R$ is a strongly $*$-clean ring with $2\in J(R)$. For any $x\in
R$, it follows from Lemma ~\ref{str3} that $x+x^*\in J(R)$. If
$1+xx^*\in U(R)$, then $(1+x)(1+x^*)=1+x+x^*+xx^*\in U(R)$. This
implies that $1+x\in R$ is right invertible. Similarly, $1+x\in R$
is left invertible. In view of Theorem~\ref{str34}, $-x\in J(R)$,
and so $x\in J(R)$. This shows that $J(R)=\{ x\in R~|~1+xx^*\in
U(R)\}$.

Conversely, assume that $(1), (2)$ and $(3)$ hold. If $1+x\in
U(R)$, then $1+x^*\in U(R)$; hence, $(1+x)(1+x^*)\in U(R)$. As
$x+x^*\in J(R)$, we see that
$1+xx^*=(1+x)(1+x^*)-(x+x^*)=(1+x)(1+x^*)\big(1-((1+x)(1+x^*))^{-1}(x+x^*)\big)\in
U(R)$. This implies that $x\in J(R)$, and so $-x\in J(R)$. As a
result, $J(R)=\{ x\in R~|~1-x\in U(R)\}$. According to
Theorem~\ref{str34}, the result follows.\end{proof}

Recall that a group $G$ with an identity $e$ is {\em torsion} provided
that for any $g\in G$ there exists some $n\in {\Bbb N}$ such that
$g^n=e$.

\begin{cor}\label{str12} Let $R$ be a $*$-ring. Then $R$ is strongly $J$-$*$-clean
if and only if
\begin{enumerate}
\item[\rm{(1)}] $R$ is strongly $*$-clean;
\item[\rm{(2)}] $2\in J(R)$;
\item[\rm{(3)}] $U\big(R/J(R)\big)$ is torsion.
\end{enumerate}
\end{cor}
\begin{proof} If $R$ is strongly $J$-$*$-clean, then $R$ is strongly
$*$-clean and $2\in J(R)$. In addition, $R/J(R)$ is Boolean. Thus,
$U\big(R/J(R)\big)=\{ \overline{1}\}$ is torsion.

Conversely, assume that $(1)$, $(2)$ and $(3)$ hold. Assume that
$1-x\in U(R)$. Then $\overline{1-x}\in U\big(R/J(R)\big)$. By
hypothesis, there exists some $n\in {\Bbb N}$ such that
$(\overline{1-x})^n=\overline{1}$, and so
$(\overline{1-x})^{2n}=\overline{1}$. As $2\in J(R)$, we see that
$x^{2n}\in J(R)$. Clearly, $R$ is an abelian exchange ring, and
then so is $R/J(R)$. This implies that $R/J(R)$ is reduced, i.e.
it has no nonzero nilpotent elements, and so $x\in J(R)$. This
implies that $J(R)=\{ x\in R~|~1-x\in U(R)\}$. Accordingly, $R$ is
strongly $J$-$*$-clean by Theorem~\ref{str34}.\end{proof}

We say that an ideal $I$ of a $*$-ring $R$ is a {\em $*$-ideal}
provided that $I^*\subseteq I$. If $I$ is a $*$-ideal of a
$*$-ring, it is easy to check that $R/I$ is also a $*$-ring.

\begin{thm}\label{thm2} Let $I$ be a $*$-ideal of a $*$-ring $R$. If $I\subseteq J(R)$,
then $R$ is strongly $J$-$*$-clean if and only if
\begin{enumerate}
\item[\rm{(1)}] $R/I$ is strongly $J$-$*$-clean;
\item[\rm{(2)}] $R$ is abelian;
\item[\rm{(3)}] Every idempotent lifts modulo $I$.
\end{enumerate}
\end{thm}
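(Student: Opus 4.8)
The plan is to route both directions through Proposition~\ref{prop1}, which says that strong $J$-$*$-cleanness is precisely the conjunction of being abelian and being $J$-$*$-clean, together with Lemma~\ref{str3}. For the forward implication, suppose $R$ is strongly $J$-$*$-clean. Condition (2) is immediate from Proposition~\ref{prop1}. For (1), I would take $\bar a\in R/I$, lift it to $a\in R$, and write $a=e+w$ with $e$ a projection, $w\in J(R)$ and $ew=we$; since the canonical surjection $R\to R/I$ is a $*$-homomorphism it carries projections to projections, and since $I\subseteq J(R)$ we have $J(R/I)=J(R)/I$, so $\bar a=\bar e+\bar w$ is a strongly $J$-$*$-clean decomposition in $R/I$. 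For (3), I would observe that $R$ is uniquely clean by Lemma~\ref{str3}, hence a (clean, hence) exchange ring, and then invoke the standard fact that idempotents lift modulo every ideal of an exchange ring; in particular they lift modulo $I$.

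For the converse, assume (1)–(3). By Proposition~\ref{prop1} it suffices to show that $R$ is $J$-$*$-clean, since abelianness is already granted by (2). Given $a\in R$, I would use (1) to write $\bar a=\bar f+\bar w$ in $R/I$ with $\bar f$ a projection and $\bar w\in J(R/I)=J(R)/I$. By (3) the idempotent $\bar f$ lifts to an idempotent $e\in R$, and then $\overline{a-e}=\bar a-\bar f=\bar w\in J(R)/I$, so $a-e\in J(R)$ because $I\subseteq J(R)$. It remains only to promote $e$ to a genuine projection, after which $a=e+(a-e)$ exhibits the $J$-$*$-clean decomposition and Proposition~\ref{prop1} concludes the argument.

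This promotion step is the heart of the proof and the main obstacle: lifting an idempotent only guarantees that its image is a projection, not that the lift itself is self-adjoint. Here the abelian hypothesis (2) is what rescues the situation. From $\overline{e^{*}}=\bar e^{*}=\bar f^{*}=\bar f=\bar e$ we get $e-e^{*}\in I\subseteq J(R)$. Since $R$ is abelian, $e$ is central, and applying $*$ shows $e^{*}$ is central too; hence $f:=ee^{*}$ is a central idempotent with $f^{*}=(ee^{*})^{*}=ee^{*}=f$, i.e. a projection, and it satisfies $ef=fe=f$. Consequently $e-f=e(e-e^{*})\in J(R)$, while $ef=fe=f$ makes $e-f$ an idempotent; as $J(R)$ contains no nonzero idempotents, $e=f$ is a projection. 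I expect the routine verifications (that $f$ is idempotent and self-adjoint, that $e-f$ is idempotent, and the identity $J(R/I)=J(R)/I$) to be mechanical, so the only genuinely load-bearing idea is this use of abelianness to force the self-adjointness of the lift.
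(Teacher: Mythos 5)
Your proof is correct, but it takes a genuinely different route from the paper's. The paper funnels both directions through the uniqueness machinery of Theorem~\ref{corJ}: for the forward direction it proves $R/I$ is \emph{uniquely} strongly $*$-clean (condition (3) of Theorem~\ref{corJ}) by pushing a projection-plus-unit decomposition $a=e+u$ into the quotient and then lifting any competing decomposition $\overline{a}=\overline{f}+\overline{v}$ back to $R$, where the uniqueness in Theorem~\ref{corJ}(5) forces $\overline{f}=\overline{e}$; for the converse it lifts the projection-plus-unit decomposition from $R/I$ to an idempotent-plus-unit decomposition in $R$ with $f-f^*\in J(R)$ and verifies both existence and uniqueness of the idempotent demanded in Theorem~\ref{corJ}(5). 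You instead stay entirely inside Proposition~\ref{prop1} (abelian $+$ $J$-$*$-clean): in the forward direction you simply push the projection-plus-radical decomposition into the quotient, using that the canonical surjection is a $*$-homomorphism and that $J(R/I)=J(R)/I$ when $I\subseteq J(R)$; in the converse you lift the projection-plus-radical decomposition and then promote the lifted idempotent $e$ to a projection via the observation that $e-e^*\in I\subseteq J(R)$, that $f=ee^*$ is a central projection with $ef=fe=f$, and that $e-f=e(e-e^*)$ is an idempotent lying in $J(R)$, hence zero. Both arguments are sound (your radical identifications and the exchange-ring lifting fact for the forward direction of (3) match what the paper uses). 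What your route buys is economy and self-containment: Theorem~\ref{corJ} is never needed, and the promotion of an idempotent to a projection, which in the paper is buried inside the proof of Theorem~\ref{corJ}, is made explicit by a short direct computation. What the paper's route buys is uniformity: both directions become instances of the already-established uniqueness characterizations, at the cost of heavier machinery and a somewhat more involved lifting argument.
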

\begin{proof} Suppose $R$ is strongly $J$-$*$-clean. Then $R$ is
an abelian exchange ring, and so every idempotent lifts modulo
$I$. We attempt to prove that $R/I$ is strongly $J$-$*$-clean, by
showing that $(3)$ of Theorem 3.2. For any $a\in R$, there exist a
projection $e\in R$ and a unit $u\in R$ such that $a=e+u$; hence,
$\overline{a}=\overline{e}+\overline{u}$ in $R/I$. Assume that
there exist a projection $\overline{f}\in R/I$ and a unit
$\overline{v}\in R/I$ such that
$\overline{a}=\overline{f}+\overline{v}$. Then, we can find an
idempotent $g\in R$ such that $f=g+r$ for some $r\in I$. Hence,
$a=g+(v+r+t)$ for some $t\in J(R)$. Obviously,
$g-g^*=f-r-f^*+r^*\in J(R)$. As $ag=ga, ag^*=g^*a$ and $v+r+t\in
U(R)$, it follows by Theorem~\ref{corJ}(5) that $g=e$, and so
$\overline{f}=\overline{e}$ in $R/I$. Therefore $R/I$ is strongly
$J$-$*$-clean.

Conversely, assume that $(1),(2)$ and $(3)$ hold. For any $a\in
R$, it follows from Theorem~\ref{corJ} that there exist a
projection $\overline{e}\in R/I$ and a unit $\overline{u}\in R/I$
such that $\overline{a}=\overline{e}+\overline{u}$. As $e-e^2\in
I$, by hypothesis, there exists an idempotent $f\in R$ such that
$e-f\in I$. Since every unit lifts modulo $I$, we may assume that
$u\in U(R)$. Thus, $a=f+u+r$ for some $r\in I$. Set $v=u+r$. Then
$a=f+v$ with $f=f^2\in R, v\in U(R)$. As $R$ is abelian, $af=fa$
and $af^*=f^*a$. Further, $f-f^*\equiv e-e^*\equiv 0 \, (
\mbox{mod}~ I)$ and so $f-f^* \in J(R)$. Suppose that $a=g+w$ with
$g=g^2\in R, w\in U(R)$, $ag=ga, ag^*=g^*a$ and $g-g^*\in J(R)$.
Then $\overline{a}=\overline{g}+\overline{w}$ in $R/I$. Clearly,
$R/I$ is uniquely clean, and so $f-g\in I\subseteq J(R)$. As
$fg=gf$, we see that $(f-g)^3=(f-2fg+g)(f-g)=f-g$, and so $f=g$.
In light of Theorem~\ref{corJ}(5), $R$ is strongly
$J$-$*$-clean.\end{proof}

Recall that a $*$-ring is {\em $*$-Boolean} in the case that every
element is a projection.

\begin{cor} Let $R$ be a $*$-ring $R$. Then $R$ is strongly $J$-$*$-clean if and only if
\begin{enumerate}
\item[\rm{(1)}] $R/J(R)$ is $*$-Boolean;
\item[\rm{(2)}] $R$ is abelian;
\item[\rm{(3)}] Every idempotent lifts modulo $J(R)$.
\end{enumerate}
\end{cor}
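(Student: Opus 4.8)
The plan is to deduce this corollary directly from Theorem~\ref{thm2} by taking the $*$-ideal $I=J(R)$. First I would verify that $J(R)$ is genuinely a $*$-ideal: since $*$ is an anti-automorphism of $R$ and the Jacobson radical is invariant under every (anti-)automorphism, we have $J(R)^*\subseteq J(R)$ (this inclusion is already invoked in the proof of Lemma~\ref{str3}), and trivially $J(R)\subseteq J(R)$. Hence Theorem~\ref{thm2} applies with $I=J(R)$ and yields that $R$ is strongly $J$-$*$-clean if and only if $R/J(R)$ is strongly $J$-$*$-clean, $R$ is abelian, and every idempotent lifts modulo $J(R)$. The last two of these are verbatim conditions (2) and (3) of the corollary, so the only thing left to match is condition (1).

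The key step is then to show that for the semiprimitive $*$-ring $S:=R/J(R)$, for which $J(S)=0$, being strongly $J$-$*$-clean is equivalent to being $*$-Boolean. If $S$ is strongly $J$-$*$-clean, then each $a\in S$ can be written $a=e+u$ with $e$ a projection and $u\in J(S)=0$; thus $a=e$ is itself a projection, so every element of $S$ is a projection and $S$ is $*$-Boolean. Conversely, if $S$ is $*$-Boolean then every $a\in S$ is a projection, and the trivial decomposition $a=a+0$ exhibits $a$ as a commuting sum of a projection and an element of $J(S)$, so $S$ is strongly $J$-$*$-clean. This establishes that condition (1) of Theorem~\ref{thm2} specialized to $I=J(R)$ is exactly condition (1) of the corollary.

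Combining the two observations completes the proof. I do not anticipate any serious obstacle: the only points demanding care are the (standard) invariance of $J(R)$ under the involution, which is what licenses the application of Theorem~\ref{thm2}, and the identity $J(R/J(R))=0$, which is precisely what collapses the radical summand in the strongly $J$-$*$-clean decomposition of $R/J(R)$ down to a genuine projection and thereby forces the $*$-Boolean condition. Everything else is a direct transcription of the hypotheses of Theorem~\ref{thm2}.
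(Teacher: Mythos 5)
Your proposal is correct and follows essentially the same route as the paper: the paper's own proof simply notes that $J(R)$ is a $*$-ideal and invokes Theorem~\ref{thm2} with $I=J(R)$, leaving implicit the observation you spell out, namely that a strongly $J$-$*$-clean ring with zero Jacobson radical is exactly a $*$-Boolean ring. Your explicit verification of that equivalence (using $J(R/J(R))=0$) and of the invariance $J(R)^*\subseteq J(R)$ just fills in details the paper declares ``easily checked.''
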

\begin{proof} One easily checks that $J(R)$ is a $*$-ideal of $R$, and thus establishing the claim by Theorem~ \ref{thm2}.\end{proof}

Let $P(R)$ be the {\it prime radical} of $R$, i.e., the
intersection of all prime ideals of $R$. Recall that $a\in R$ is
{\em strongly nilpotent} if for every sequence $a_0,a_1,\cdots
,a_i,\cdots$ such that $a_0 =a$ and $a_{i+1}\in a_iRa_i$, there
exists an $n$ with $a_n=0$. As is well known, the prime radical
$P(R)$ is the set of all strongly nilpotent elements in $R$.

\begin{cor}\label{st1} A $*$-ring $R$ is strongly $J$-$*$-clean if and only
if $R$ is abelian and $R/P(R)$ is strongly $J$-$*$-clean.
\end{cor}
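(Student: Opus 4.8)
The plan is to deduce this corollary from Theorem~\ref{thm2} applied to the ideal $I = P(R)$. For that theorem to apply I must check its two standing hypotheses: that $P(R)$ is a $*$-ideal and that $P(R)\subseteq J(R)$. The inclusion $P(R)\subseteq J(R)$ is a standard fact, since every prime ideal is contained in a maximal (hence primitive) ideal, so the intersection of all prime ideals sits inside the intersection of all primitive ideals. To see that $P(R)$ is a $*$-ideal, I would use that the anti-automorphism $*$ carries prime ideals to prime ideals: if $Q$ is prime then $ab\in Q^*$ forces $b^*a^*\in Q$, whence $a^*\in Q$ or $b^*\in Q$, i.e.\ $a\in Q^*$ or $b\in Q^*$. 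Thus $*$ permutes the set of prime ideals of $R$, and intersecting gives $P(R)^*=P(R)$, so in particular $P(R)^*\subseteq P(R)$. (Alternatively one checks directly that $a$ strongly nilpotent implies $a^*$ strongly nilpotent by reversing a witnessing sequence under $*$.)

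With both hypotheses in hand, Theorem~\ref{thm2} with $I=P(R)$ states that $R$ is strongly $J$-$*$-clean if and only if $R/P(R)$ is strongly $J$-$*$-clean, $R$ is abelian, and every idempotent lifts modulo $P(R)$. The corollary omits the lifting condition, so the real content is to show that condition~(3) of Theorem~\ref{thm2} is automatic here. The key observation is that $P(R)$, being the set of all strongly nilpotent elements of $R$, is a nil ideal (each strongly nilpotent element is in particular nilpotent). Since idempotents always lift modulo a nil ideal, condition~(3) holds for free, and the two surviving conditions are precisely $R$ abelian and $R/P(R)$ strongly $J$-$*$-clean.

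Having reduced matters this way, the forward implication of the corollary follows immediately from the forward direction of Theorem~\ref{thm2}, and the converse follows from its converse direction, since abelianness of $R$ together with strong $J$-$*$-cleanness of $R/P(R)$ and the automatic lifting of idempotents supply all three hypotheses required. The step I expect to require the most care is the verification that $P(R)$ is a $*$-ideal and the explicit recognition that $P(R)$ is nil so that lifting is free; the rest is a direct invocation of Theorem~\ref{thm2}.
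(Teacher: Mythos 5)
Your overall strategy is exactly the paper's: apply Theorem~\ref{thm2} with $I=P(R)$, after checking that $P(R)$ is a $*$-ideal contained in $J(R)$ and that idempotents lift modulo $P(R)$ because $P(R)$ is nil. The paper verifies the $*$-ideal property by the sequence-reversal argument on strongly nilpotent elements, which you mention parenthetically as an alternative; your prime-ideal-permutation argument also works, but only if primeness is stated correctly for noncommutative rings: you must check that $aRb\subseteq Q^*$ forces $a\in Q^*$ or $b\in Q^*$ (equivalently, apply $*$ to get $b^*Ra^*\subseteq Q$ and use primeness of $Q$), not merely that $ab\in Q^*$ does --- the elementwise condition defines completely prime ideals, which is a strictly stronger notion.

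The one genuinely invalid step is your justification of $P(R)\subseteq J(R)$, which runs backwards. From ``every prime ideal is contained in a maximal (hence primitive) ideal'' you can only conclude that $P(R)$ lies in the intersection of \emph{some} subfamily of primitive ideals, and such an intersection is in general \emph{larger} than $J(R)$, so nothing follows. The correct (and standard) argument goes the other way: every primitive ideal is prime, so $P(R)$, being the intersection of \emph{all} prime ideals, is contained in every primitive ideal, hence in their intersection $J(R)$. With that repair, and the primeness fix above, your proof is complete and is essentially the paper's proof; your explicit observation that $P(R)$ is nil (so idempotents lift for free) is a detail the paper asserts without comment, and spelling it out is a small improvement in readability.
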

\begin{proof} Let $a\in P(R)$. For every sequence $a_0,a_1,\cdots ,a_i,\cdots$ such
that $a_0 =a^*$ and $a_{i+1}\in a_iRa_i$, we get a sequence
$a_0^*,a_1^*,\cdots ,a_i^*,\cdots$ such that $a_0^*=a$ and
$a_{i+1}^*\in a_i^*Ra_i^*$. As $a\in R$ is strongly nilpotent, we
can find some $n$ such that $a_n^*=0$, and so $a_n=0$. This
implies that $a^*$ is strongly nilpotent; hence, $a^*\in P(R)$. We
infer that $P(R)$ is a $*$-ideal. As every idempotent lifts modulo
$P(R)$, we complete the proof by Theorem~\ref{thm2}.\end{proof}

In \cite{LZ}, Li and Zhou proved that a $*$-ring $R$ is strongly
$*$-clean if and only if $R/J(R)$ is strongly $*$-clean, every
projection is central and every projection lifts to a projection
modulo $J(R)$. Analogous to the previous discussion, we easily
prove that a $*$-ring $R$ is strongly $*$-clean if and only if
$R/J(R)$ is strongly $*$-clean; $R$ is abelian and every
idempotent lifts modulo $J(R)$.

\section{Certain Extensions}

By applying the preceding results, we will construct various
examples of strongly $J$-$*$-clean rings. Let $R$ be a $*$-ring,
and let $R[i]=\{ a+bi~|~a,b\in R, i^2=-1\}$. Then $R[i]$ is also a
$*$-ring by defining $*: a+bi\mapsto a^{*}+b^{*}i$.

\begin{lem}\label{eight} Let $R$ be a ring with $2\in J(R)$. Then $U\big(R[i]\big)=\{ a+bi~|~a+b\in U(R)\}$.
\end{lem}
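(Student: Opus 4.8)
The plan is to prove the two inclusions of the set equality separately, exploiting that the imaginary unit $i$ is central in $R[i]$, that $i^2=-1$, and above all that $(1-i)^2=-2i$ has coefficient $-2\in J(R)$. The computational engine throughout will be the observation that for any $c\in J(R)$ the element $1+ci$ is a unit of $R[i]$: indeed $(1\pm ci)(1\mp ci)=1-(ci)^2=1+c^2$, and $1+c^2\in U(R)$ since $c^2\in J(R)$, so $1+ci$ has the two-sided inverse $(1+c^2)^{-1}(1\mp ci)$. This fact replaces the ``conjugate over the norm'' trick that would be available if $R$ were commutative.

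For the inclusion $\{a+bi\mid a+b\in U(R)\}\subseteq U(R[i])$, I would set $s=a+b\in U(R)$ and rewrite $a+bi=s-b(1-i)=s\big(1-s^{-1}b(1-i)\big)$, reducing the problem to showing $1-r(1-i)\in U(R[i])$ for every $r\in R$ (here $r=s^{-1}b$). Writing $p=r(1-i)$ and using that $1-i$ is central with $(1-i)^2=-2i$, one computes $p^2=r^2(1-i)^2=-2r^2i$, whence $(1-p)(1+p)=(1+p)(1-p)=1-p^2=1+2r^2i$. Since $2\in J(R)$ forces $2r^2\in J(R)$, the opening observation makes $1+2r^2i$ a unit, so $1-p$ is a unit and therefore $a+bi=s(1-p)$ is a product of units.

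For the reverse inclusion $U(R[i])\subseteq\{a+bi\mid a+b\in U(R)\}$, I would introduce the map $\psi:R[i]\to R/J(R)$, $a+bi\mapsto\overline{a+b}$, and verify it is a ring homomorphism. The only nontrivial point is multiplicativity: the images of $(a+bi)(c+di)$ and of $\psi(a+bi)\psi(c+di)$ differ by $\overline{-2bd}$, which vanishes because $2\in J(R)$. A ring homomorphism carries units to units, so if $a+bi\in U(R[i])$ then $\overline{a+b}\in U(R/J(R))$, and hence $a+b\in U(R)$, using the standard fact that an element of $R$ is a unit precisely when its image in $R/J(R)$ is.

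I expect the first inclusion to be the only real obstacle, namely producing an inverse of $1-r(1-i)$ in the noncommutative setting where no naive inverse formula closes up. The identity $(1-p)(1+p)=1+2r^2i$ is exactly what resolves it: it shows that modulo the radical $1-i$ behaves like a radical element, so $R[i]$ looks like $R/J(R)$ with $i$ identified with $1$. Conceptually this is the statement that $\ker\psi=\{a+bi\mid a+b\in J(R)\}$ coincides with $J(R[i])$; but the explicit factorizations above deliver the unit statement directly and sidestep any computation of the radical.
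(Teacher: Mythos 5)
Your proof is correct, and it takes a genuinely different route from the paper's in both directions. For the inclusion $\{a+bi \mid a+b\in U(R)\}\subseteq U\big(R[i]\big)$, the paper constructs an explicit two-sided inverse by a conjugation trick: after noting $a-b=(a+b)-2b\in U(R)$, it verifies the identities $a(a-b)^{-1}a=a+b+b(a-b)^{-1}b$ and $b(a-b)^{-1}a=a(a-b)^{-1}b$, and then computes $(a+bi)(a-b)^{-1}(a-bi)(a+b)^{-1}=1+2b(a-b)^{-1}b(a+b)^{-1}\in U(R)$, with a symmetric computation for left invertibility. Your factorization $a+bi=(a+b)\big(1-r(1-i)\big)$ with $r=(a+b)^{-1}b$, together with the key lemma that $1+ci\in U\big(R[i]\big)$ for $c\in J(R)$ and the identity $(1-p)(1+p)=1+2r^2i$ where $p=r(1-i)$, reaches the same conclusion with far less computation and isolates the real mechanism: $(1-i)^2=-2i$ behaves like a radical element because $2\in J(R)$. (The small step you leave implicit -- that commuting elements whose product is a unit are each units -- is standard and unproblematic.) For the reverse inclusion, the paper works coefficientwise with a right inverse, getting $(a+b)(c+d)=1+2bd\in U(R)$ and appealing to symmetry for the left side; you instead package the same congruence into the unital ring homomorphism $\psi:R[i]\to R/J(R)$, $a+bi\mapsto\overline{a+b}$ (multiplicative precisely because $\overline{2bd}=0$), which carries units to units, and then lift units through $J(R)$. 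The homomorphism viewpoint handles left and right invertibility simultaneously and is more conceptual, while the paper's argument is entirely explicit and self-contained; both are valid.
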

\begin{proof} Assume that $(a+bi)(c+di)=1$. Then $ac-bd=1$ and
$ad+bc=0$. Thus, $(a+b)(c+d)=ac+bd+ad+bc=(ac-bd)+2bd=1+2bd\in
U(R)$. This implies that $a+b\in R$ is right invertible. As a
result, we show that $U\big(R[i]\big)\subseteq \{ a+bi~|~a+b\in
U(R)\}$.

Assume that $a+b\in U(R)$. Then $a-b=(a+b)-2b\in U(R)$. Clearly,
$a(a-b)^{-1}a=(a-b+b)(a-b)^{-1}(a-b+b)=\big(1+b(a-b)^{-1}\big)(a-b+b)=a+b+b(a-b)^{-1}b$.
Therefore $a(a-b)^{-1}a(a+b)^{-1}-b(a-b)^{-1}b(a+b)^{-1}=1$.
Likewise, $b(a-b)^{-1}a-a(a-b)^{-1}b=0$. It is easy to check that
$$\begin{array}{lll}
&&(a+bi)(a-b)^{-1}(a-bi)(a+b)^{-1}\\
&=&\big(a(a-b)^{-1}a(a+b)^{-1}+b(a-b)^{-1}b(a+b)^{-1}\big)
 +\big(b(a-b)^{-1}a-a(a-b)^{-1}b\big)i\\
 &=&1+2b(a-b)^{-1}b(a+b)^{-1}\\
 &\in &U(R).
 \end{array}$$
 Thus, $a+bi\in R[i]$ is right invertible.
 Analogously, $(a-b)^{-1}(a-bi)(a+b)^{-1}(a+bi)\in U(R)$.
 Therefore $a+bi\in U\big(R[i]\big)$, as required.\end{proof}

\begin{prop}\label{pac7} Let $R$ be a $*$-ring. Then $R[i]$ is strongly $J$-$*$-clean if and only if so is $R$.
\end{prop}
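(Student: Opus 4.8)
The plan is to prove both implications from three facts about $R[i]$. First, $U(R[i])\cap R=U(R)$: if $u\in R$ has inverse $c+di\in R[i]$, then comparing components in $u(c+di)=(c+di)u=1$ (recall $i$ is central) gives $uc=cu=1$ with $c\in R$; hence also $R\cap J(R[i])\subseteq J(R)$. Second, whenever $2\in J(R)$ — which holds on either side of the equivalence, since a strongly $J$-$*$-clean ring has Boolean, hence characteristic-two, radical quotient by Proposition~\ref{prop26} — the set $I:=\{a+bi\mid a,b\in J(R)\}$ is a two-sided ideal with $I\subseteq J(R[i])$ (it is an ideal all of whose elements are quasiregular by Lemma~\ref{eight}), and the obvious map yields $R[i]/I\cong (R/J(R))[i]$. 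Third, when $R/J(R)$ is Boolean we have $(1+i)^2=2i\in I$, so $1+i$ is nilpotent modulo $I$, and $J\big((R/J(R))[i]\big)=(R/J(R))(1+i)$ with $R[i]/J(R[i])\cong R/J(R)$; in particular the preimage of $(R/J(R))(1+i)$ under $R[i]\to (R/J(R))[i]$ is exactly $J(R[i])$.

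For the implication that $R$ strongly $J$-$*$-clean makes $R[i]$ strongly $J$-$*$-clean, I would construct the decomposition directly, avoiding any prior claim that $R[i]$ is abelian. Given $z=a+bi$, I apply strong $J$-$*$-cleanness of $R$ to the element $a+b$ to obtain a projection $f\in R$ and $t\in J(R)$ with $a+b=f+t$. Since $R$ is abelian by Proposition~\ref{prop1}, $f$ is central in $R$, hence central in $R[i]$, so $f$ is a projection of $R[i]$ commuting with every element. It remains to see $z-f\in J(R[i])$: writing $a-f=t-b$ and reducing $z-f=(a-f)+bi$ modulo $I$, its image is $\overline{b}(1+i)\in (R/J(R))(1+i)=J\big((R/J(R))[i]\big)$ by the third fact, whence $z-f\in J(R[i])$. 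Thus $z=f+(z-f)$ is the desired decomposition.

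For the converse I would recover a projection of $R$ from the one produced in $R[i]$. Let $a\in R$ and write $a=g+w$ with $g$ a projection of $R[i]$, $w\in J(R[i])$ and $ag=ga$. Now $R[i]$ is strongly $J$-$*$-clean, hence abelian by Proposition~\ref{prop1}, so $g$ is central; writing $g=p+qi$, centrality forces $p$ and $q$ to commute with $R$, the relation $g^*=g$ gives $p^*=p$, and $g^2=g$ gives $p^2-q^2=p$ together with $2pq=q$. The crux is that $2\in J(R)$ makes $q$ vanish to second order: from $q=2pq$ I get $q\in J(R)$ and $q^2=2pq^2$, so $(1-2p)q^2=0$; as $1-2p\in U(R)$ this yields $q^2=0$ and hence $p^2=p$. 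Therefore $p$ is a projection of $R$. Finally $qi\in I\subseteq J(R[i])$, so $a-p=w+qi\in R\cap J(R[i])\subseteq J(R)$, while $ap=pa$; thus $a=p+(a-p)$ exhibits $R$ as strongly $J$-$*$-clean.

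The main obstacle is the converse: the projection supplied by $R[i]$ need not lie in $R$, and a priori one only recovers an idempotent of $R$ modulo $J(R)$. The nilpotence identity $q^2=0$ — which rests squarely on $2\in J(R)$ — is exactly what upgrades $p$ to a genuine idempotent and closes this gap. I expect the remaining care to go into the bookkeeping of the first paragraph, namely pinning down $U(R[i])$ and $J(R[i])$ through Lemma~\ref{eight}.
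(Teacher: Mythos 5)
Your proof is correct; it matches the paper on one implication and genuinely differs on the other. For the direction $R[i]$ strongly $J$-$*$-clean $\Rightarrow$ $R$ strongly $J$-$*$-clean, your argument is essentially the paper's: extract a central projection $g=p+qi$, use $2\in J(R)$ to dispose of the $i$-component, and land $a-p$ in $R\cap J(R[i])\subseteq J(R)$. (One simplification: $2pq=q$ gives $(1-2p)q=0$ with $1-2p\in U(R)$, so $q=0$ outright --- this is what the paper does --- and your detour through $q\in J(R)$, $q^2=0$, $p^2=p$ is correct but unnecessary.) The direction $R$ strongly $J$-$*$-clean $\Rightarrow$ $R[i]$ strongly $J$-$*$-clean is where you diverge. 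The paper applies cleanness twice, to $a$ and to $b$ separately, writes $a+bi=(e+f)+\big(u-f+(f+v)i\big)$, must then lift $e+f$ --- which is only an idempotent modulo $J(R)$, since $(e+f)^2-(e+f)=2ef\in J(R)$ --- to a genuine idempotent $g$ via the exchange property, and finally verifies membership in $J(R[i])$ by an ad hoc computation: for every $c+di$ the component sum of $1-(x+yi)(c+di)$ equals $1-(x+y)(c+d)+2yd\in U(R)$, so Lemma~\ref{eight} applies. Your single application of cleanness to $a+b$ produces a genuine central projection with no lifting needed, and your structural identification of $J(R[i])$ as the preimage of $J\big((R/J(R))[i]\big)=(R/J(R))(1+i)$ --- equivalently, $J(R[i])=\{x+yi\mid x+y\in J(R)\}$ --- replaces the paper's hand computation by a reusable description of the radical; the price is setting up $I=\{x+yi\mid x,y\in J(R)\}\subseteq J(R[i])$, the isomorphism $R[i]/I\cong (R/J(R))[i]$, and the radical of $S[i]$ for Boolean $S$, all of which you justify correctly and all of which still rests on Lemma~\ref{eight}. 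In short, your route is structurally cleaner and yields a by-product of independent interest (the explicit form of $J(R[i])$), while the paper's is more elementary and self-contained, needing nothing beyond Lemma~\ref{eight} and idempotent lifting.
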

\begin{proof} Suppose that $R[i]$ is strongly $J$-$*$-clean. Then
$2\in J(R)$. Further, every idempotent in $R[i]$ is a projection,
and it is central. Let $a\in R$. Then we can find a projection
$e+fi\in R[i]$ and an element $u+vi\in J\big(R[i]\big)$ such that
$a=(e+fi)+(u+vi)$ and $a(e+fi)=(e+fi)a$. Thus, $a=e+u$ and
$ae=ea$. As $e+fi\in R[i]$ is central, $e\in R$ is central. Since
$(e+fi)^*=e+fi$, we see that $e^*=e$. From $e+fi=(e+fi)^2$, we get
$e^2-f^2=e$ and $2ef=f$. This implies that $(2e-1)f=0$, and then
$f=0$. Hence, $e\in R$ is a projection. It is easy to verify that
$u\in J(R)$, and therefore $R$ is strongly $J$-$*$-clean.

Conversely, assume that $R$ is strongly $J$-$*$-clean. Then $R$ is
an abelian exchange ring. In addition, every idempotent in $R$ is
a projection and $2\in J(R)$. Let $a+bi\in R[i]$. By hypothesis,
there exist projections $e,f\in R$ and $u,v\in J(R)$ such that
$a=e+u,b=f+v, ae=ea, bf=fb$. Thus,
$a+bi=(e+f)+\big(u-f+(f+v)i\big)$. Clearly, $(e+f)^2-(e+f)=2ef\in
J(R)$. As every idempotent lifts modulo $J(R)$, we can find an
idempotent $g\in R$ such that $e+f=g+r$ where $r\in J(R)$. Thus,
$a+bi=g+\big(r+u-f+(f+v)i\big)$ where $g=g^2=g^*$ and
$(a+bi)g=g(a+bi)$.

Write $x=r+u-f$ and $y=f+v$. Then $x+y=r+u+v\in J(R)$. For any
$c+di\in R[i]$, we see that $1-(x+yi)(c+di)=(1-xc+yd)-(xd+yc)i$.
As $(1-xc+yd)-(xd+yc)=(1-xc-yd)-(xd+yc)+2yd=1-(x+y)(c+d)+2yd\in
U(R)$. In light of Lemma~\ref{eight}, $1-(x+yi)(c+di)\in
U\big(R[i]\big)$, and so $x+yi\in J\big(R[i]\big)$. Therefore
$R[i]$ is strongly $J$-$*$-clean, as asserted.\end{proof}

Let $R$ be a $*$-ring. Then $*$ induces an involution of the power
series ring $R[[x]]$, denoted by $*$, where
$\big(\sum\limits_{i=0}^{\infty}a_ix^i\big)^*
=\sum\limits_{i=0}^{\infty}a^*_ix^i$. This induces the involution
on $R[[x]]/(x^{n})$ $(n\geq 1)$.

\begin{prop}\label{pac11}  Let $R$ be a $*$-ring. Then the following are equivalent:
\begin{itemize}
\item[{\rm(1)}] $R$ is strongly $J$-$*$-clean.
\item[{\rm(2)}] $R[[x]]$ is strongly $J$-$*$-clean.
\item[{\rm(3)}] $R[[x]]/(x^n)$ is strongly $J$-$*$-clean for all $n\geq 2$.
\item[{\rm(4)}] $R[[x]]/(x^2)$ is strongly $J$-$*$-clean.
\end{itemize}
\end{prop}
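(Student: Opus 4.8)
The plan is to route every implication through Theorem~\ref{thm2}, using in each case the canonical $*$-ideal $I=xS$ (respectively $I=(x)/(x^n)$) that sits inside the Jacobson radical and whose quotient recovers $R$ as a $*$-ring. First I would record the two ambient facts that make Theorem~\ref{thm2} applicable. For $S=R[[x]]$, the ideal $(x)=xR[[x]]$ is a $*$-ideal, since $x^*=x$ and $x$ is central, and it lies in $J(S)$ because $1+xR[[x]]\subseteq U(S)$; moreover $S/(x)\cong R$ as $*$-rings. Exactly the same holds for $(x)/(x^n)\subseteq R[[x]]/(x^n)$, which is a nilpotent (hence radical) $*$-ideal with quotient $R$. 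Since a constant idempotent of $R$ is already an idempotent of $S$, idempotents always lift modulo these ideals, so condition (3) of Theorem~\ref{thm2} is automatic in every case.

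The crux is the abelianness hypothesis (2) of Theorem~\ref{thm2}, i.e., that $R[[x]]$ and each $R[[x]]/(x^n)$ is abelian whenever $R$ is. I would prove the clean statement that every idempotent of these rings is constant. Given an idempotent $e=e_0+h$ with $h$ of positive order, the constant term $e_0$ is an idempotent of $R$, hence central (as $R$ is abelian by Proposition~\ref{prop1}); expanding $e^2=e$ and using the centrality of $e_0$ gives $h(h+2e_0-1)=0$. Since $(2e_0-1)^2=1$, the element $2e_0-1$ is a unit, and $h$ lies in the radical (it is in $(x)$, which is contained in $J$ in the power series case and is nilpotent in the truncated case), so $h+2e_0-1$ is a unit and therefore $h=0$. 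Thus $e=e_0$ is central, and the rings in question are abelian.

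With these observations in hand the equivalences follow mechanically. For $(1)\Leftrightarrow(2)$: if $R[[x]]$ is strongly $J$-$*$-clean then the forward direction of Theorem~\ref{thm2} yields that $R\cong R[[x]]/(x)$ is strongly $J$-$*$-clean; conversely, given $R$ strongly $J$-$*$-clean, the converse direction of Theorem~\ref{thm2} applies because $R[[x]]/(x)\cong R$ is strongly $J$-$*$-clean, $R[[x]]$ is abelian by the crux, and idempotents lift. I would close the remaining cycle through the truncations by the same two appeals: $(1)\Rightarrow(3)$ uses the converse direction for each $S=R[[x]]/(x^n)$, $(3)\Rightarrow(4)$ is the specialization $n=2$, and $(4)\Rightarrow(1)$ uses the forward direction with $S=R[[x]]/(x^2)$ and $I=(x)/(x^2)$. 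The only genuinely nontrivial step is the abelianness lemma; everything else is the identification of the relevant $*$-ideal together with an application of Theorem~\ref{thm2}.
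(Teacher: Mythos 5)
Your proof is correct, but it takes a genuinely different route from the paper's. The paper handles each implication by a separate appeal to the literature: $(1)\Rightarrow(2)$ cites the known facts that strong $*$-cleanness and strong $J$-cleanness both pass to $R[[x]]$ and then invokes Proposition~\ref{prop1}; $(2)\Rightarrow(3)$ goes through unique cleanness (transferred to $R[[x]]/(x^n)$ by a result of Nicholson--Zhou) together with the criterion of Lemma~\ref{str3} that $f-f^*\in J(S)$ for all $f$; and $(4)\Rightarrow(1)$ is a short direct computation reading off constant terms. You instead give a uniform reduction: in every case you identify the kernel $I=(x)$ (resp.\ $(x)/(x^n)$) as a $*$-ideal inside the Jacobson radical with quotient $R$ and with idempotent lifting via constants, and then apply Theorem~\ref{thm2} in whichever direction is needed. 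The one genuinely new ingredient you must supply is the abelianness hypothesis of Theorem~\ref{thm2}, which you prove via the elementary lemma that over an abelian base ring every idempotent $e=e_0+h$ of $R[[x]]$ or $R[[x]]/(x^n)$ is constant: the relation $h(h+2e_0-1)=0$ with $h$ in the radical and $2e_0-1$ a unit forces $h=0$. Both arguments are sound; the trade-off is that the paper's proof is shorter but leans on three external citations, while yours is self-contained within the paper (only Theorem~\ref{thm2} and Proposition~\ref{prop1} are used), treats $R[[x]]$ and all truncations by literally the same mechanism, and isolates the structural reason the proposition holds, namely that $(x)$ is a radical $*$-ideal with quotient $R$ along which idempotents lift.
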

\begin{proof} $(1)\Rightarrow (2)$ Since $R$ is strongly $J$-$*$-clean,
$R[[x]]$ is strongly $*$-clean by \cite[Corollary 2.10]{LZ}.
$R[[x]]$ is also strongly $J$-clean by \cite[Example 16.4.17]{Ch}.
Hence $R[[x]]$ is strongly $J$-$*$-clean by
Proposition~\ref{prop1}.

$(2)\Rightarrow (3)$ In view of Lemma ~\ref{str3}, $R[[x]]$ is
uniquely clean, and then so is $S:=R[[x]]/(x^n)$ by \cite[Theorem
22]{NZ}. For any $f\in S$, it follows from Lemma ~\ref{str3} that
$f(0)-\big(f(0)\big)^*\in J(R)$, and so
$f-f^*=f(0)-(f(0))^*+\sum\limits_{i=1}^{n-1}b_ix^i\in J(S)$. By
using Lemma ~\ref{str3} again, $R[[x]]/(x^n)$ is a strongly
$J$-$*$-clean ring.

$(3)\Rightarrow (4)$ is trivial.

$(4)\Rightarrow (1)$ Let $S=R[[x]]/(x^2)$. For any $a=a+0x\in S$, there
exists a projection $e+fx\in S$ such that
$a(e+fx)=(e+fx)a$ and $a-(e+fx)\in J(S)$. This
implies that $e\in R$ is a projection, $ae=ea$ and $a-e\in J(R)$.
Therefore $R$ is a strongly $J$-$*$-clean ring.\end{proof}

Since $R[x]/(x^2)$ and $R[[x]]/(x^2)$ are isomorphic,
Proposition~\ref{pac11} also implies that a $*$-ring is strongly
$J$-$*$-clean if and only if $R[x]/(x^2)$ is.

Let $R$ be a $*$-ring and $G$ be a group. Then $*$ induces an
involution of the group ring $RG$, denoted by $*$, where $(\sum_g
a_g g)^*=\sum_g a_g^* g^{-1}$ (see \cite[Lemma 2.12]{LZ}). A group
$G$ is called {\em locally finite} if every finitely generated
subgroup of $G$ is finite. A group $G$ is a {\em $2$-group} if the
order of each element of $G$ is a power of $2$.

\begin{prop}\label{three} Let $R$ be a $*$-ring, and let $G$ be a locally finite group. Then $RG$ is
strongly $J$-$*$-clean if and only if $R$ is strongly
$J$-$*$-clean and $G$ is a $2$-group.
\end{prop}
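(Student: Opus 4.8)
My plan is to reduce both implications to Lemma~\ref{str3}, working through the augmentation homomorphism $\varepsilon\colon RG\to R$, $\sum_g a_g g\mapsto \sum_g a_g$, which is a surjective $*$-ring homomorphism (one checks $\varepsilon(a^*)=\varepsilon(a)^*$) whose kernel is the augmentation ideal $\omega(RG)$ generated by the elements $g-1$. The single observation that makes everything run is this: whenever $RG$ is strongly $J$-$*$-clean (or merely uniquely clean), $RG/J(RG)$ is Boolean by Proposition~\ref{prop26}, so for every $g\in G$ the image $\overline g$ is an idempotent unit and hence $\overline g=\overline 1$; thus $g-1\in J(RG)$ and $\omega(RG)\subseteq J(RG)$. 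Since $(g-1)^*=g^{-1}-1$, the ideal $\omega(RG)$ is a $*$-ideal, which puts us in position to apply Theorem~\ref{thm2}.

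For the only-if direction I would first note that, $RG$ being strongly $J$-$*$-clean, the ideal $\omega(RG)$ is a $*$-ideal contained in $J(RG)$ by the remark above, so Theorem~\ref{thm2} immediately yields that $RG/\omega(RG)\cong R$ is strongly $J$-$*$-clean. To show $G$ is a $2$-group I would use local finiteness, which makes every element of finite order: if $G$ were not a $2$-group it would contain an element $h$ of odd prime order $p$. Setting $j=h-1\in J(RG)$, the relation $h^p=1$ expands to $j\,w=0$ with $w=p+\binom{p}{2}j+\cdots+j^{p-1}$. Because $2\in J(R)\subseteq J(RG)$, in the Boolean ring $RG/J(RG)$ one has $\overline{w}=\overline{p}=\overline 1$ (as $p$ is odd), so $w\in U(RG)$, and then $j\,w=0$ forces $j=0$, i.e.\ $h=1$, a contradiction.

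For the if direction I would start from the assumption that $R$ is strongly $J$-$*$-clean and $G$ is a locally finite $2$-group. Then $R$ is uniquely clean by Lemma~\ref{str3}, and by the known characterization of uniquely clean group rings (\cite{NZ}) $RG$ is uniquely clean; in particular $RG/J(RG)$ is Boolean and $2\in J(RG)$. As above this gives $\omega(RG)\subseteq J(RG)$, so $\varepsilon$ induces an isomorphism $RG/J(RG)\cong R/J(R)$ and therefore $J(RG)=\varepsilon^{-1}\big(J(R)\big)$. Now for any $a\in RG$ one has $\varepsilon(a-a^*)=\varepsilon(a)-\varepsilon(a)^*\in J(R)$ since $R$ is strongly $J$-$*$-clean (Lemma~\ref{str3}), whence $a-a^*\in J(RG)$. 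Thus $RG$ is uniquely clean with $a-a^*\in J(RG)$ for every $a$, and Lemma~\ref{str3} delivers that $RG$ is strongly $J$-$*$-clean.

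The main obstacle I anticipate is the cleanness input in the if direction: the manipulations with $\varepsilon$ are purely formal, but establishing that $RG$ is uniquely clean genuinely uses the group-ring structure, namely that $G$ being a locally finite $2$-group together with $2\in J(R)$ forces $\omega(RG)$ into $J(RG)$ and makes idempotents lift from $R$. If one does not wish to invoke the uniquely-clean group ring theorem as a black box, one must instead prove directly that $\omega(RG)\subseteq J(RG)$ by reducing to finite $2$-subgroups, where $\omega$ is nilpotent modulo $J(R)\,RG$, and then show that every idempotent of $RG$ is central and lifts from $R$; this is the technical heart and the step most likely to require care.
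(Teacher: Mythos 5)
Your proof is correct, but it takes a genuinely different route from the paper's. The paper leans on three group-ring results from \cite{CHZ}: in the forward direction it quotes \cite[Theorem 12]{CHZ} to obtain at one stroke that $R$ is uniquely clean and $G$ is a $2$-group, and in both directions it uses \cite[Lemma 11]{CHZ} (when $2\in J(R)$ and idempotents of $R$ are central, every idempotent of $RG$ already lies in $R$) to transfer the ``idempotents are projections'' property between $R$ and $RG$, concluding via Theorem~\ref{corJ} and Proposition~\ref{prop1}. You instead organize everything around the augmentation $*$-homomorphism $\varepsilon$: the observation that Boolean-ness of $RG/J(RG)$ forces $\omega(RG)\subseteq J(RG)$ lets you (i) deduce that $R\cong RG/\omega(RG)$ is strongly $J$-$*$-clean from the forward half of Theorem~\ref{thm2}; (ii) rule out elements of odd prime order by the elementary binomial computation $jw=0$ with $w\in 1+J(RG)\subseteq U(RG)$, so you never need \cite[Theorem 12]{CHZ}; and (iii) in the converse, verify criterion (2) of Lemma~\ref{str3} by computing $\varepsilon(a-a^*)=\varepsilon(a)-\varepsilon(a)^*\in J(R)$ and pulling back along $J(RG)=\varepsilon^{-1}(J(R))$, so you never need \cite[Lemma 11]{CHZ}. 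The one external ingredient you share with the paper is the uniquely clean group ring theorem in the ``if'' direction; note this is \cite[Theorem 13]{CHZ}, not a result of \cite{NZ} as you wrote. Your route buys self-containment (one black box instead of three, with everything else drawn from the paper's own results) and a reusable device, namely pushing $*$-conditions through $\varepsilon$; the paper's route is shorter on the page and records the sharper structural fact that all idempotents of $RG$ sit inside $R$. Two cosmetic repairs: obtain $2\in J(RG)$ directly from the Boolean quotient (since $\overline{2}=\overline{2}^2=\overline{4}$ gives $\overline{2}=\overline{0}$) rather than via the inclusion $J(R)\subseteq J(RG)$, which is true here but itself requires the identification $J(RG)=\varepsilon^{-1}(J(R))$; and note explicitly that $\varepsilon$ intertwines the involutions, so that $RG/\omega(RG)\cong R$ is an isomorphism of $*$-rings, which is what Theorem~\ref{thm2} needs.
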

\begin{proof} Suppose that $RG$ is  strongly $J$-$*$-clean. Then it is uniquely clean.
By virtue of \cite[Theorem 12]{CHZ}, $R$ is uniquely clean and $G$
is a $2$-group. In view of Lemma ~\ref{str3}, $2\in J(RG)$, and so
$2\in J(R)$. In addition, every idempotent in $RG$ is central, and
so every idempotent in $R$ is central in $R$. According to
\cite[Lemma 11]{CHZ}, every idempotent in $RG$ is in $R$.
Therefore every idempotent in $R$ is a projection. According to
Theorem~\ref{corJ}, $R$ is strongly $J$-$*$-clean.

Conversely, assume that $R$ is a strongly $J$-$*$-clean ring and
$G$ is a $2$-group. By virtue of \cite[Theorem 13]{CHZ}, $RG$ is
uniquely clean. As $R$ is strongly $J$-$*$-clean, we see that
$2\in J(R)$. Further, every idempotent in $R$ is central.
According to \cite[Lemma 11]{CHZ}, every idempotent in $RG$ is in
$R$. Therefore every idempotent in $RG$ is a projection. By using
Proposition~\ref{prop1}, $RG$ is strongly $J$-$*$-clean.
\end{proof}

\begin{cor}\label{four} Let $R$ be a $*$-ring, and let $G$ be a solvable group. Then $RG$ is strongly $J$-$*$-clean
if and only if $R$ is strongly $J$-$*$-clean and $G$ is a
$2$-group.
\end{cor}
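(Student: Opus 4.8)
The plan is to deduce this directly from Proposition~\ref{three}, so the only genuine work is a group-theoretic reduction: I must show that a solvable $2$-group is automatically locally finite, after which both implications collapse to the locally finite case already treated.

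First I would isolate the group-theoretic fact. A $2$-group in the sense used here is periodic, since every element has order a power of $2$. The claim I need is that a \emph{solvable periodic} group $G$ is locally finite. I would prove this by induction on the derived length of $G$, using the classical fact that the class of locally finite groups is closed under extensions: if $N \trianglelefteq H$ with both $N$ and $H/N$ locally finite, then $H$ is locally finite (O.~Schmidt). The base case is an abelian periodic group, which is locally finite because a finitely generated abelian torsion group is finite. For the inductive step, pass to the derived subgroup $G'$: it is solvable periodic of strictly smaller derived length, hence locally finite by induction, while $G/G'$ is abelian periodic, hence locally finite; the extension property then makes $G$ locally finite.

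With this in hand the corollary is immediate. For the forward direction, suppose $RG$ is strongly $J$-$*$-clean. Then $RG$ is uniquely clean, so by \cite[Theorem 12]{CHZ} the group $G$ is a $2$-group; being also solvable, $G$ is locally finite by the previous paragraph, and Proposition~\ref{three} then gives that $R$ is strongly $J$-$*$-clean. Conversely, if $R$ is strongly $J$-$*$-clean and $G$ is a solvable $2$-group, then $G$ is locally finite, whence Proposition~\ref{three} yields that $RG$ is strongly $J$-$*$-clean.

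The only real obstacle is the group-theoretic lemma that a solvable periodic group is locally finite; everything else merely restates Proposition~\ref{three}. This lemma is classical, but it is the step doing the actual work here, since it is precisely what converts the solvability hypothesis into the local finiteness hypothesis demanded by Proposition~\ref{three}.
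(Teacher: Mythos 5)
Your proof is correct and follows essentially the same route as the paper: both directions reduce to Proposition~\ref{three} by converting solvability into local finiteness, the paper deferring the fact that a solvable $2$-group is locally finite to the argument in \cite[Theorem 13]{CHZ}, while you supply the standard proof (induction on derived length plus Schmidt's theorem on extensions of locally finite groups). The only cosmetic difference is in the necessity direction, where the paper re-runs the proof of Proposition~\ref{three} directly (which needs no local finiteness), whereas you first extract the $2$-group property via \cite[Theorem 12]{CHZ}, deduce local finiteness, and then invoke Proposition~\ref{three} as a black box --- both are valid.
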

\begin{proof} The proof of necessity is the same as in Proposition~\ref{three}.
Conversely, assume that $R$ is a strongly $J$-$*$-clean ring and
$G$ is a $2$-group. Analogously to the consideration in
\cite[Theorem 13]{CHZ}, $G$ is locally finite, and then the result
follows from Proposition~\ref{three}.
\end{proof}

\vskip8mm \hspace{-1.5em}{\Large\bf Acknowledgements}\\

\hspace{-1.5em}The authors are grateful to the referee for his/her
suggestions which corrected many errors in the first version and
made the new one clearer.


\begin{thebibliography}{99}
\bibitem{Be} S.K. Berberian, Baer $*$-Rings,
Springer-Verlag, Heidelberg, London, New York, 2011.
\bibitem{Che1} H. Chen, On strongly $J$-clean rings, Comm. Algebra, 38 (2010), 3790-3804.
\bibitem{Che2} H. Chen, On uniquely clean rings, Comm. Algebra, 39 (2011), 189-198.
\bibitem{Ch} H. Chen, Rings Related Stable Range Conditions, Series in Algebra 11, World Scientific, Hackensack,
NJ., 2011.
\bibitem{CHZ} J. Chen, W.K. Nicholson and Y. Zhou, Group rings in which every element is
uniquely the sum of a unit and an idempotent, J. Algebra, 306
(2006) 453-460.
\bibitem{CWZ} J. Chen, Z. Wang and Y. Zhou, Rings in which elements are uniquely the sum of an idempotent and
a unit that commute, J. Pure Appl. Algebra, 213(2009), 215-223.
\bibitem{LZ1} T.K. Lee and Y. Zhou, A class of exchange rings, {\it Glasgow
Math. J.}, {\bf 50}(2008), 509--522.
\bibitem{LZ} C. Li and Y. Zhou, On strongly $*$-clean rings, J. Algebra Appl., 10 (2011), 1363-1370.
\bibitem{N} W.K. Nicholson, Strongly clean rings and Fitting's lemma, Comm. Algebra, 27 (1999), 3583-3592.
\bibitem{NZ} W.K. Nicholson and Y. Zhou, Rings in which elements are
uniquely the sum of an idempotent and a unit, Glasgow Math. J., 46
(2004), 227-236.
\bibitem{Va} L. Va$\check{\mbox{s}}$, $*$-Clean rings; some clean and almost clean Baer $*$-rings and von Neumann
algebras, J. Algebra, 324 (2010), 3388-3400.
\end{thebibliography}
\end{document}